\theoremstyle{italic}
\newtheorem{theorem}{{\sc Theorem}}
\newtheorem{defn}[theorem]{{\sc Definition}}
\newtheorem{lemma}[theorem]{Lemma}
\newtheorem{remark}[theorem]{Remark}
\newtheorem{remarks}[theorem]{Remarks}
\theoremstyle{definition}
\newcommand{\bfX}{\mathbf{X}}
\newcommand{\bbX}{\mathbb{X}}
\newcommand{\R}{\mathbb{R}}
\newcommand{\E}{\mathbb{E}}
\newcommand{\bbA}{\mathbb{A}}
\newcommand{\OO}{\mathbb{O}}
\newcommand{\mcC}{\mathcal{C}}
\newcommand{\mcA}{\mathcal{A}}
\newcommand{\mcU}{\mathcal{U}}
\newcommand{\mcV}{\mathcal{V}}
\newcommand{\bfa}{\bf a}
\newcommand{\Lip}{\operatorname{Lip}}
\newcommand{\spann}{\operatorname{span}}
\newcommand{\drp}{\ell} 
\newcommand{\dode}{d}   
\newenvironment{Dem}{%
    \begin{list}{\hspace{0.5cm}{\sc Proof --}}{%
        \setlength{\topsep}{0pt}%
        \setlength{\leftmargin}{0pt}%
        \setlength{\rightmargin}{0pt}%
        \setlength{\listparindent}{0pt}%
        \setlength{\itemindent}{0pt}%
        \setlength{\parsep}{0pt}%
        \addtolength{\leftmargin}{20pt}%
        \addtolength{\rightmargin}{0pt}%
    } \item }{\hfill{\space $\rhd$}\end{list}\smallskip}
\begin{document}

\title{The inverse problem for rough controlled differential equations}
\author{I. Bailleul and J. Diehl}
\address{IRMAR, 263 Avenue du General Leclerc, 35042 RENNES, France}
\email{ismael.bailleul@univ-rennes1.fr}
\address{Institut f\"ur Mathematik, MA 7-2. Strasse des 17. Juni 136 10623 BERLIN, Germany}
\email{diehl@math.tu-berlin.de}
\thanks{This research was partially supported by the program ANR "Retour Post-Doctorants", under the contract ANR 11-PDOC-0025, and DFG Schwerpunktprogramm 1324. The first author thanks the U.B.O. for their hospitality. The second author would like to thank Paul Gassiat and Nicolas Perkowski for helpful remarks.}
\keywords{Rough paths theory, rough differential equations, signal reconstruction}

\maketitle

\begin{abstract}
We provide a necessary and sufficient condition for a rough control driving a differential equation to be reconstructable, to some order, from observing the resulting controlled evolution. Physical examples and applications in stochastic filtering and statistics demonstrate the practical relevance of our result.
\end{abstract}


\section{Introduction}
\label{SectionIntroduction}

It is a classical topic in control theory to consider differential equations of the form
\begin{align*}
  \dot x_t = \sum_{i=1}^\ell V_i(x_t) u^i_t,
\end{align*}
with nice enough vector fields $V_i: \R^\dode \to \R^\dode$, say bounded and Lipschitz for the moment, and the controls $u^i: [0,T] \to \R, i = 1, \dots, \drp$ are integrable functions that represent data that can usually be tuned on demand in a real-life problem. It will pay off to consider the controls $u^i$ as the time-derivative of absolutely continuous functions $X^i$, and reformulate the above dynamics under the form
\begin{align}
  \label{eq:controlledEquation}
  dx_t = V_i(x_t) dX^i_t,
\end{align}
with the usual convention for sums. In this work we consider the \emph{inverse problem}, of recovering the control $X$ using only the knowledge of the dynamics generated by the above equation, and assuming that we know the vector fields $V_i$.

With a view to applications involving noisy signals such as Brownian paths, we are particularly interested in the setting where $X$ is \emph{not} absolutely continuous, but only $\alpha$-H\"older continuous, for some $\alpha \in \big(\frac{1}{3},\frac{1}{2}\big]$, say. Although equation \eqref{eq:controlledEquation} seems non-sensical in this case, it is given meaning by the theory of rough paths, as invented by T. Lyons \cite{Lyons98}, and reformulated and enriched by numerous works since its introduction. We recall the essentials of this theory in Section \ref{SectionRoughPaths} so as to make it accessible to a large audience; it suffices for the moment to emphasize that driving signals in this theory do not consist only of $\R^\ell$-valued $\alpha$-H\"older continuous paths $X$, rather they are pairs $(X,\bbX)$, where the $\R^\ell\times\R^\ell$-valued second object $\mathbb{X}$, is to be thought of as the collection of ''iterated integrals $\int X^j dX^k$''. The point here is that the latter expression is a priori meaningless if we only know that $X$ is $\alpha$-H\"older, for $\alpha\leq \frac{1}{2}$, as one cannot make sense of such integrals without additional structure. So these iterated integrals have to be provided as \emph{additional a priori data}. The enriched control $(X,\mathbb{X})$ is what we call a {\bf\textit{rough path}}. Lyons' main breakthrough in his fundamental work \cite{Lyons98} was to show that equation \eqref{eq:controlledEquation} can be given sense, and is well-posed, whenever $X$ is understood as a rough path, provided the vector fields $V_i$ are sufficiently regular. Moreover the solution map which associates to the starting point $x_0$, and the rough path $(X,\mathbb{X})$, the solution path $x_\bullet$ is continuous. This is in stark contrast with the fact that solutions of stochastic differential equations driven by Brownian motion are only measurable functions of their drivers, with no better dependence as a rule. Section \ref{SectionRoughPaths} offers a short review of rough paths theory sufficient to grasp the core ideas of the theory and for our needs in this work.

From a practical point of view, it may be argued that there are no physical examples of real-life observable paths which are truely $\alpha$-H\"older continuous paths; rather, all paths can be seen as smooth, albeit with possibly very high fluctuations that make them appear rough on a macroscopic scale. However, the invisible microscopic fluctuations might lead to macroscopic effects when the path (the control) is acting on a dynamical system. A rough path somehow provides a mathematical abstraction of this fact by recording this microscopic scale effects that a highly-oscillatory smooth signal may have on a dynamics system in the second order object $\bbX$. 

\medskip

While rough paths theory could originally be thought of as a theoretical
framework for the study of controlled systems, its core notions and tools have
proved extremely useful in handling a number of practical important problems.
As an example, Lyons and Victoir's cubature method on Wiener space
\cite{LyonsVictoir} has led to efficient numerical schemes for the simulation
of various partial differential equations, such as the HJM or CIR equations,
and related quantities of interest in mathematical finance. Let us
mention, as another example, that the use of the core concept of signature of a
signal in the setting of learning theory is presently being investigated
\cite{LevinLyonsNi,GyurkoLyonsKontkowskiField}, and may well bring deep
insights into this subject. In a different direction, in \cite{CrisanDiehlOberhauser},
it was shown that the optimal filter in stochastic filtering, which is in general \emph{not} a continuous function on path space, can actually be defined as a continuous functional on the rough path space; see Section \ref{SectionApplications} for details. This result motivated the present work, since it leaves the practitioner with the task of ''observing a rough path'', if she/he wants to use this continuity result to provide robust approximations for the optimal filter, by feeding her/his approximate rough path into the continuous function that gives back the optimal filter. 
It should be clear from the above heuristic picture of a rough path that the only way to uncover the 'microscopic' second level component of a rough path is to let that rough path act on a dynamical system, a rough differential equation. 

After introducing the reader with the essentials of rough paths theory in section \ref{SectionRoughPaths}, we give as our main result, section \ref{SectionMainResult}, a necessary and sufficient condition for the reconstruction of a rough path to be possible from the observation of the dynamics generated by a rough differential equation with that rough path as driver. Its proof is given in section \ref{SectionProofs}. Section \ref{SectionExamples} provides several examples of physical systems that satisfy our assumptions, and section \ref{SectionApplications} offers applications of our result to problems in filtering and statistics.

\section{Rough controlled differential equations}
\label{SectionRoughPaths}

As a first step in rough paths theory, let us consider the controlled ordinary differential equation
$$
dx_t = V_i(x_t)dX^i_t
$$
driven by a smooth (or bounded variation) $\R^\ell$-valued control $X$. It is elementary to iterate the formula
\begin{align*}
  V_i(x_t)
  = V_i(x_s) + \int_s^t \partial_j V_i(x_r) V^j_k(X_r) dX^k_r
  =: V_i(x_s) + \int_s^t V_k V_i(x_r)dX^k_r,
\end{align*}
to obtain a kind of Taylor-Euler expansion of the solution path to the above equation, under the form
\begin{align*}
  x_t
  &=
  x_s
  +
  \sum_{n=1,\dots,N-1} \sum_{i_1, \dots, i_n = 1,\dots,\drp} \left( \int_s^t \int_s^{r_n} \dots \int_s^{r_2} dX^{i_1}_{r_1} \dots dX^{i_n}_{r_n} \right) V_{i_1} \dots V_{i_n}(x_s)  \\
  &\qquad\; + 
  \sum_{i_1, \dots, i_N = 1,\dots,\drp} \int_s^t \int_s^{r_N} \dots \int_s^{r_2} V_{i_1} \dots V_{i_N}(x_{r_1}) dX^{i_1}_{r_1} \dots dX^{i_n}_{r_N},
\end{align*}
provided the vector fields are sufficiently regular for this expression to make sense. If $V$ and its first $N$ derivatives are bounded, the last term is of order $|t-s|^N$. So the following numerical scheme 
\begin{equation}
  \label{eq:eulerScheme}
  \begin{cases}
  x^n_0 &:= x_0 \\
  x^{n}_{t_i} &:= x^n_{t_{i-1}}
  +
  \sum_{n=1,\dots,N-1} \sum_{i_1, \dots, i_n = 1,\dots,\drp}  V_{i_1} \dots V_{i_n}(x^n_{t_{i=1}}) \int_s^t \int_s^{r_n} \dots \int_s^{r_2} dX^{i_1}_{r_1} \dots dX^{i_n}_{r_n},
  \end{cases}
\end{equation}
written here for any partition $\tau = (t_i)$ of $[0,T]$, is of order $|\tau|^{N-1}$, where $|\tau|$ denotes the meshsize of the partition. See for example Proposition 10.3 in \cite{bibFrizVictoir} for more details.

\smallskip

In the mid-90's T. Lyons \cite{Lyons98} understood that one can actually make sense of the controlled differential equation \eqref{eq:controlledEquation} even if $X$ is not of bounded variation, \emph{if one provides a priori the values of sufficiently many iterated integrals $\int \dots \int dX \dots dX$, and one defines a solution path to the equation as a path for which the above scheme is exact up to a term of size $\big|t_{i+1}-t_i\big|^a$, for some constant $a>1$.} 

\begin{defn}
  Fix a finite time horizon $T$, and let $\alpha \in \big(\frac{1}{3},\frac{1}{2}\big]$. An \textbf{$\alpha$-H\"older rough path} $\mathbf{X} = (X,\mathbb{X})$ consists of a pair $(X,\bbX)$, made up  of an $\alpha$-H\"older continuous path $X: [0,T] \to \R^\drp$,
  and a two parameter function $\mathbb{X}: \{(t,s)\,;\,0\leq s\leq t\leq T\} \to (\R^\ell)^{\otimes 2}\simeq \R^{\drp \times \drp}$,
  such that the inequality
  \begin{align*}
    |\mathbb{X}_{s,t}| \le C |t-s|^{2\alpha} 
  \end{align*}
  hold for some positive constant $C$, and all $0\leq s\leq t\leq T$, and we have 
  \begin{equation}
  \label{EqChen}
    \mathbb{X}^{jk}_{ts} - \mathbb{X}^{jk}_{tu} - \mathbb{X}^{jk}_{us} = X^j_{s,u} X^k_{u,t},
  \end{equation}
  for all $0\leq s\leq u\leq t\leq T$. The rough path $\bfX$ is said to be \textbf{weakly geometric} if the symmetric part of $\bbX_{ts}$ is given in terms of $X_{ts}$ by the relation
  \begin{align*}
  \mathbb{X}^{jk}_{s,t} + \mathbb{X}^{kj}_{s,t} = X^j_{s,t}\,X^k_{s,t}. 
  \end{align*}
We define a norm on the set of $\alpha$-H\"older rough path setting
$$
\|{\bfX}\| := \|X\|_\infty + \|X\|_\alpha + \|\bbX\|_{2\alpha},
$$
where $\|\cdot\|_\gamma$ stands for the $\gamma$-H\"older norm of a 1 or 2-index map, for any $0<\gamma<1$. 
\end{defn}

\smallskip

To make sense of these conditions, think of $\bbX^{jk}_{ts}$ as $\int_s^t (X_r-X_s)^j\,dX^k_r$, even though this integral does not make sense in our setting. (Young integration theory \cite{Young,LyonsStFlour} can only make sense of such integrals if $X$ is $\alpha$-H\"older, with $\alpha >\frac{1}{2}$.) When $X$ is smooth its increments have size $(t-s)$, and the increments $\bbX_{ts}$ have size $(t-s)^2$. Convince yourself that relation \eqref{EqChen} comes in that model setting from Chasles' relation $\int_s^u + \int_u^t = \int_s^t$. The symmetry condition satisfied by weak geometric rough paths is satisfied by the rough paths lift of any smooth path. We invite the reader to check that if $B$ stands for a Brownian motion and $\frac{1}{3}\leq \alpha <\frac{1}{2}$, we define an $\alpha$-H\"older rough path setting
$$
{\bf B}_{ts} = \left(B_t-B_s,\int_s^t (B_r-B_s)\otimes dB_r\right),
$$
where the above integral is an It\^o integral. This rough path is not weakly geometric however, while we would define a weakly geometric rough path by using a Stratonovich integral in the above definition of $\bf B$. See for example Chapter 3 in \cite{FrizHairer} for more background in this direction. 

Let denote by $\sf V$ the collection of some vector fields $\big(V_1,\dots,V_\ell\big)$ on $\R^d$.
\begin{defn}
\label{DefnRDE}
  Fix a finite time horizon $T$, and let $\mathbf{X}$ be a weak geometric $\alpha$-H\"older rough path, with $\alpha \in \big(\frac{1}{3},\frac{1}{2}\big]$. A \textbf{path} $x_\bullet : [0,T] \to \R^\dode$ is said to \textbf{solve the rough differential equation}
  \begin{align}
    \label{eq:RDE}
    dx_t = {\sf V}(x_t) \mathbf{X}(dt),
  \end{align}
  if 
  we have the following Taylor-Euler expansion
  \begin{align}
    \label{eq:eulerApproximation}
    x_t = x_s + V_i(x_s) X^i_{ts} + V_j V_k(x_s) \mathbb{X}^{jk}_{ts} + O\big(|t-s|^{3\alpha}\big),
  \end{align}
with 
$$
  \Big|O\big(|t-s|^{3\alpha}\big)\Big| \leq c({\bfX,V})\,|t-s|^{3\alpha},
$$
for some positive constant $c({\bfX},\sf V)$ depending only on $\bfX$ and the $\Lip^3$ norm of the $\sf V$.
\end{defn}
As a sanity check, one can easily verify that if the vector fields $V_i$ are constant, the solution path to the above rough differential equation, started from $x_0$, is given by just $x_t = x_0 + X^i_{ts}V_i$.

It can be proved that if $(X,\bbX)$ is the rough path above Brownian motion introduced above, but with a Stratonovich integral rather than an It\^o integral, then a solution path to the above rough differential equation is a solution path to the Stratonovich differential equation
$$
dx_t = V_i(x_t)\,{\circ dB^i_t}.
$$
This is what makes rough paths theory so appealing for applications to stochastic calculus. See e.g. \cite{FrizHairer}.

\smallskip

Rather than solving the rough differential equation \eqref{eq:RDE} for each fixed starting point, we can actually construct a flow of maps $(\varphi_{ts})_{0\leq s\leq t\leq T}$ with the property that the path $\big(\varphi_{\bullet 0}(x)\big)$ is for each starting point $x$ the solution to equation \eqref{eq:RDE} started from $x$. Given a bounded Lipschtiz continuous vector field $W$ on $\R^d$ we denote by $\exp(W)$ the time 1 map of $W$, that associates to any point $x\in\R^d$ the value at time 1 of the solution to the ordinary differential equation $\dot y = W(y)$, started from $x$. Fix $0\leq s \leq t\leq T$, and denote by $\bbA_{ts}$ the antisymmetric part of $\bbX_{ts}$. Setting
$$
\mu_{ts} := \exp\left(\sum_{i=1}^\ell X^i_{ts}V_i + \sum_{1\leq j<k\leq \ell}\bbA^{jk}_{ts}\big[V_j,V_k\big]\right),
$$
an elementary Taylor-Euler expansion \cite{BailleulFlows}, using the weak geometric character of $\bfX$, shows that $\mu_{ts}$ satisfies the kind of Taylor-Euler expansion we expect from a solution path to the above rough differential equation, as we have
$$
\Big| f\circ\mu_{ts} - \Big(f + X^i_{ts}\,V_if + \mathbb{X}^{jk}_{ts}\,V_j V_kf\Big)\Big| \leq c({\bfX},f)\,|t-s|^{3\alpha}.
$$
Theorem \ref{ThmLyonsUniversal} is the cornerstone of the theory of rough differential equations. It was first proved in a different form by Lyons \cite{Lyons98}, and was named '\textit{Lyons' universal limit theorem}' by Malliavin. Its present form is a mix of Lyons' original result, Davie's approach \cite{Davie} and the first author's approach \cite{BailleulFlows} to rough differential equations. A vector field of class $\mcC^2_b$,with Lipschitz second derivative is said to be $\Lip^3$ in the sense of Stein.

\begin{theorem}[Lyons' universal limit theorem]
\label{ThmLyonsUniversal}
  Let $\mathbf{X}$ be an $\alpha$-H\"older rough path, $\alpha \in \big(\frac{1}{3},\frac{1}{2}\big]$.
  Let ${\sf V} = (V_i)_{i=1,\dots,\ell}$ be a collection of $\Lip^3$ vector fields on $\R^d$.
  \begin{enumerate}
      \item  There exists a unique flow of maps $(\varphi_{ts})_{0\leq s\leq t\leq T}$ on $\R^d$ such that the inequality
      $$
      \big\| \varphi_{ts} - \mu_{ts}\big\|_\infty \leq c({\bfX})\,|t-s|^{3\alpha}      
      $$
      holds for some positive constant $c({\bfX})$ depending only on $\bfX$, for all times $0\leq s\leq t\leq T$. 
      
      \item  Given any starting point $x\in\R^d$, there exists a unique solution path $x_\bullet$ to the rough differential equation \eqref{eq:RDE}; this solution path is actually given, for all $0\leq t\leq T$, by 
      $$
      x_t = \varphi_{t0}(x_0).
      $$
      \item The solution path $x_\bullet \in \Big(\mcC\big([0,T],\R^d\big),\|\cdot\|_\infty\Big)$ depends \emph{continuously} on $\bfX$.  
\end{enumerate}    
\end{theorem}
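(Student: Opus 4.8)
The plan is to follow the flow approach to rough differential equations: read the family $(\mu_{ts})_{0\le s\le t\le T}$ as an \emph{approximate flow} and show that any such approximate flow sits within $O(|t-s|^{3\alpha})$ of a unique genuine flow, from which all three assertions follow. This is in the spirit of \cite{Davie, BailleulFlows}. The first step is to upgrade the Taylor--Euler estimate recorded just before the theorem,
$$\Big| f\circ\mu_{ts} - \Big(f + X^i_{ts}\,V_if + \mathbb{X}^{jk}_{ts}\,V_jV_kf\Big)\Big| \le c(\mathbf{X},f)\,|t-s|^{3\alpha},$$
into a near-cocycle property. Applying this identity to $\mu_{ts}$, to $\mu_{us}$, and to the composition $\mu_{tu}\circ\mu_{us}$ for $s\le u\le t$, the zeroth order terms agree trivially, the first order terms agree by additivity of the increments of $X$, and the remaining second order contributions combine, via Chen's relation \eqref{EqChen} and the $\alpha$-H\"older bounds on $X$ and $\mathbb{X}$, to give
$$\big\|\mu_{ts} - \mu_{tu}\circ\mu_{us}\big\|_\infty \le c(\mathbf{X})\,|t-s|^{3\alpha}, \qquad s\le u\le t,$$
with a constant depending on $\mathbf{X}$ only through $\|\mathbf{X}\|$ and on $\sf V$ only through its $\Lip^3$ norm; one also records that, for $|t-s|$ small, $\mu_{ts}$ is a Lipschitz homeomorphism with $\|\mu_{ts}-\mathrm{Id}\|_\infty = O\big(|t-s|^\alpha\big)$ and Lipschitz constant $1+O\big(|t-s|^\alpha\big)$.

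For a partition $\pi=(s=t_0<\dots<t_n=t)$ of $[s,t]$, set $\mu_\pi:=\mu_{t_n t_{n-1}}\circ\cdots\circ\mu_{t_1 t_0}$. Comparing $\mu_\pi$ with the product obtained by inserting one extra point and summing the resulting one-step defects, one shows that $(\mu_\pi)_\pi$ is Cauchy in $\big(\mathcal{C}(\R^d),\|\cdot\|_\infty\big)$ as $|\pi|\to 0$: the defect produced at a point $t_i$ is $O\big(|t_{i+1}-t_{i-1}|^{3\alpha}\big)$, transporting it through the remaining maps costs only a Lipschitz factor that stays bounded uniformly in $\pi$ once one works on a time interval of length $h_0=h_0(\|\mathbf{X}\|)$, and since $3\alpha>1$ the bound $\sum_i|t_{i+1}-t_{i-1}|^{3\alpha}\le |\pi|^{3\alpha-1}(t-s)$ tends to $0$. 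The limit $\varphi_{ts}$ then satisfies $\|\varphi_{ts}-\mu_{ts}\|_\infty\le c(\mathbf{X})|t-s|^{3\alpha}$ and, in the limit, the exact flow identity $\varphi_{tu}\circ\varphi_{us}=\varphi_{ts}$; concatenating the small-interval construction yields $\varphi$ on all of $[0,T]$. For uniqueness, if $\psi$ is another flow with $\|\psi_{ts}-\mu_{ts}\|_\infty\le c(\mathbf{X})|t-s|^{3\alpha}$, then $\|\psi_{ts}-\varphi_{ts}\|_\infty=O\big(|t-s|^{3\alpha}\big)$; splitting $[s,t]$ into $n$ equal pieces and telescoping with the exact cocycle property and the uniform Lipschitz bounds of both $\psi$ and $\varphi$ bounds the difference by $c\,n\,(|t-s|/n)^{3\alpha}\to 0$, so $\psi=\varphi$. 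This proves (1).

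For (2), set $x_t:=\varphi_{t0}(x_0)$. Then $x_t=\varphi_{ts}(x_s)$, and evaluating $\|\varphi_{ts}-\mu_{ts}\|_\infty\le c(\mathbf{X})|t-s|^{3\alpha}$ at $x_s$ together with the Euler expansion of $\mu_{ts}$ at $x_s$ yields exactly \eqref{eq:eulerApproximation}, so $x_\bullet$ solves \eqref{eq:RDE}. Conversely, if $y_\bullet$ solves \eqref{eq:RDE}, then $y_t$ and $\varphi_{t0}(y_0)$ admit the same Euler expansion with remainder $O\big(|t-s|^{3\alpha}\big)$, and the subdivision argument above (each step of a fine partition contributing $O\big(|t_i-t_{i-1}|^{3\alpha}\big)$, summed against the uniform Lipschitz bound of $\varphi$, vanishing because $3\alpha>1$) forces $y_t=\varphi_{t0}(y_0)$. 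For (3), the map $\mathbf{X}\mapsto\mu_{ts}$ is continuous — it is an explicit expression in $X^i_{ts}$ and $\mathbb{A}^{jk}_{ts}$ through the time-one map of a bounded Lipschitz vector field — and all constants above depend on $\mathbf{X}$ only through $\|\mathbf{X}\|$; so for $\mathbf{X}^m\to\mathbf{X}$ one gets $\mu^m\to\mu$ uniformly on compacts with uniform approximate-flow bounds. Fixing a fine partition $\pi$ and writing
$$\big\|\varphi^m_{ts}-\varphi_{ts}\big\|_\infty \le \big\|\varphi^m_{ts}-\mu^m_\pi\big\|_\infty + \big\|\mu^m_\pi-\mu_\pi\big\|_\infty + \big\|\mu_\pi-\varphi_{ts}\big\|_\infty,$$
where the outer terms are $O\big(|\pi|^{3\alpha-1}\big)$ uniformly in $m$ and the middle term $\to 0$ as $m\to\infty$ for fixed $\pi$, a three-epsilon argument gives $\varphi^m\to\varphi$ uniformly, hence $x^m_\bullet=\varphi^m_{\bullet 0}(x_0)\to\varphi_{\bullet 0}(x_0)=x_\bullet$ in $\big(\mathcal{C}([0,T],\R^d),\|\cdot\|_\infty\big)$.

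The step I expect to be the main obstacle is the convergence claim in the second paragraph. A priori, composing $n$ maps each only $\big(1+O(|t_i-t_{i-1}|^\alpha)\big)$-Lipschitz produces a product whose Lipschitz constant could grow like $\exp\big(c\sum_i|t_i-t_{i-1}|^\alpha\big)$, and this sum \emph{diverges} as the mesh shrinks; one must therefore obtain, on a short time interval whose length depends only on $\|\mathbf{X}\|$, a Lipschitz bound for the partial products $\mu_\pi$ that is uniform in $\pi$, through a bootstrap exploiting the smallness of $\|\mu_{ts}-\mathrm{Id}\|$ itself. Only then do the defects $O\big(|t_i-t_{i-1}|^{3\alpha}\big)$ telescope with bounded weights, and the decisive inequality $3\alpha>1$ makes the resulting series summable. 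This is the flow-level analogue of the sewing lemma; once it is in place, the solution-path and continuity statements follow by soft arguments.
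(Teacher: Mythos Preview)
The paper does not supply its own proof of this theorem: it is stated as background, with the discussion immediately following it attributing Part~(1) to \cite{BailleulFlows} and Parts~(2)--(3) to \cite{Lyons98}, \cite{Davie}, \cite{Gubinelli}, \cite{BailleulFlows}. Your sketch is precisely the approximate-flow approach of \cite{BailleulFlows} that the paper cites, and it is correct in outline; you have also accurately located the one genuinely delicate point (the uniform-in-$\pi$ Lipschitz control of the partial compositions $\mu_\pi$ on a short interval), which is exactly the technical heart of that reference.
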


The crucial point in the above statement is the continuous dependence of the solution path as a function of the rough signal $\bfX$, in stark contrast with the fact that solutions of stochastic differential equations are only measurable functionals of the Brownian path, while rough differential equations can be used to solve Stratonovich differential equations. The twist here is that theonly purely measurable operation that is done here is in defining the iterated integrals $\int_s^t (B_r-B_s)\otimes {\circ dB_r}$; once this is done, the machinery for solving the rough differential equation \eqref{eq:RDE} is continuous with respect to the Brownian rough path. This continuity result was used for instance to give streamlined proofs of deep results in stochastic analysis such as Stroock-Varadhan support theorem for diffusion processes, or the basics of Freidlin-Wentzell theory of large deviations for diffusion processes \cite{LedouxQianZhang}.

\smallskip

Parts (2) and (3) of this theorem can be proved in several ways.The original approach of Lyons \cite{Lyons98} was to recast it under a fixed point problem involving a rough integral, which has to be defined first. (This argument has been streamlined by Gubinelli in \cite{Gubinelli}, see also the monograph \cite{FrizHairer}.) Existence and well-posedness results using second-order Milstein type scheme of the form \eqref{eq:eulerScheme} were introduced in that setting by Davie in \cite{Davie}, and generalized in the work \cite{BailleulFlows} of the first author to deal with rough differential equations driven by weak geometric $\alpha$-H\"older rough paths, for any $0<\alpha<1$, using a geometric approach with roots in the work \cite{Strichartz} of Strichartz and the novel tool of approximate flows. Part (1) of the above theorem is from \cite{BailleulFlows}.

\smallskip

We refer the reader to the textbooks \cite{LyonsQian} of Lyons and Qian, or \cite{FrizVictoir} of Friz and Victoir, for a thorough account of the theory, and to the lecture notes \cite{CaruanaLevyLyons, FrizHairer, M2Course} for shorter pedagogical accounts.

\section{The inverse problem}
\label{SectionMainResult}

We would like to propose the examples of application of rough paths theory given above, and many others, as an illustration of one of T. Lyons' leitmotivs: \textit{Rough paths are not mathematical abstractions, they appear in Nature.} Starting form this postulate, and keeping in mind that rough paths can be understood as a convenient mathematical setting for describing both the macroscopic and microscopic scales of highly oscillating signals, the aim of this work is to answer an important question that comes with this postulate: {\it Can one \emph{observe and record} a rough path?} We shall handle this problem in the model setting of a physical system associated with a rough differential equation, which leads to the following question. {\it Under what conditions on the driving vector fields can one recover the driving rough path by observing the solution flow to that equation?} It is indeed not always possible to reconstruct the driving signal; as an example, take a rough differential equation with constant vector fields, where the second level of the rough path has no influence on the solution, as made clear after definition \ref{DefnRDE}. 

To give a motivating example, assume that two dynamics are described by two rough differential equations \textit{driven by the same rough path}. Think for instance to two multi-dimensional assets. It may happen that one is interested in one of these assets while one can only observe the other. How should we proceed then if one wants to make a trade only when the unobservable asset is in some given region of its state space? If we could reconstruct approximatively the rough signal from the observation of the first asset dynamics, we could use the continuity statement in Lyons' universal limit theorem, theorem \ref{ThmLyonsUniversal}, to plug this approximate signal into the dynamics of the second asset and get an approximate path whose distance to the true second asset path is quantifiable, leading to a trading strategy. Although we whall not develop this example of optimal stopping problem with incomplete information, we shall give other examples where the approximate (or ideally, exact) reconstruction of a rough signal allows to compensate an a priori lack of information.

\medskip

To be more specific, our problem reads as follows. Given some sufficiently regular vector-field valued $1$-form $\mathsf{V}= (V_1,\dots,V_\ell)$ on $\R^\ell$, and a weak geometric $\alpha$-H\"older rough path $\bfX$ over $\R^\ell$, with $\alpha \in \big(\frac{1}{3},\frac{1}{2}\big]$, defined on the time interval $[0,1]$ say, denote by $\big(\varphi_{ts}\big)_{0\leq s\leq t\leq 1}$ the solution flow \cite{BailleulFlows} to the rough differential equation 
\begin{equation}
\label{EqRDE}
dx_r = \mathsf{V}(x_r)\,{\bfX}(dr)
\end{equation}
in $\R^d$. (This equation is the correct form that equation \eqref{eq:controlledEquation} takes when the control $h$ is a rough path $\bfX$.
Not only can it be solved for each fixed initial condition, but it also defines a flow of maps, as we have seen in Section \ref{SectionRoughPaths}. The map $\varphi_{ts}$ associates to $x$ the solution at time $t$ of equation \eqref{EqRDE} started at time $s$ from $x$.) Assume we observe increments of the different solution paths, started from $c$ distinct points $x_j,\ j=1,\dots,c$; that is, we have access to the data
$$
  z^{x_1, \dots, x_c}_{ts}  := \Big( \varphi_{ts}\big(x_1\big), \dots, \varphi_{ts}\big(x_c\big) \Big).
$$ 
Our goal is to reconstruct the driving signal $\bfX$ using uniquely this information. As the counter-example of constant vector fields shows, the ability to do so depends on the $1$-form $\mathsf{V}$.

We first make precise, what we mean by saying that ``reconstruction is possible''.
Fix $\alpha \in (1/3,1/2]$ for the rest of the paper.

\begin{defn}
  \label{DefnReconstruction}
  The $1$-form $\mathsf{V}$ is said to have the {\bf reconstruction property}
  if one can find an integer $c \ge 1$,
  points $x_1, \dots, x_c \in \R^d$,
  a constant $a>1$, and a function $\mathcal{X} : \R^{c d} \rightarrow T^2(\R^\ell) \cong \R^d\oplus(\R^d)^{\otimes 2}$, with components $\mathcal{X}^1$ and $\mathcal{X}^2$, such that one can associate to every positive constant $M$ another positive constant $C_M$ such that the inequalities
\begin{equation}
\label{EqConditionReconstruction}
    \Big|\mathcal{X}^1\big(z^{x_1, \dots, x_c}_{ts}\big) - X_{ts}\Big| \leq C_M|t-s|^a,\quad \Big|\mathcal{X}^2\big(z^{x_1, \dots, x_c}_{ts}\big) - \bbX_{ts}\Big| \leq C_M|t-s|^a
\end{equation}
  hold for {\em all} weak geometric $\alpha$-H\"older rough paths $\bfX$ with $\|\bfX\|_\alpha \le M$, for all times $0\leq s\leq t\leq 1$ sufficiently close.
\end{defn}

\noindent These inequalities ensure that the $T^2(\R^\ell)$-valued functional $\mathcal{X}\big(z^{x_1, \dots, x_c}_{ts}\big)$ is almost-multiplicative \cite{LyonsStFlour}, with associated multiplicative functional $\bfX$. Hence, by a fundamental result of Lyons \cite{Lyons98,LyonsQian}, one can - in principle - completely reconstruct $\bfX_{ts}$ from the knowledge of the $\mathcal{X}\big(z^{x_1, \dots, x_c}_{ba}\big)$, with $s\leq a\leq b\leq t$.

\medskip

\begin{remark}
  \label{rem:lieRoughPath}
  Since $\bfX$ is weak-geometric, the symmetric part of $\bbX_{ts}$ is equal to $\frac{1}{2} X_{ts} \otimes X_{ts}$. So the essential information in the rough path $\bfX$ is given by $X_{ts}$ and the antisymmetric part $\bbA_{ts}$ of $\bbX_{ts}$. This pair lives in $\R^{\frac{\ell(\ell+1)}{2}}$. For the reconstruction property to hold one can alternatively find a function $\mathscr{R}$ such that
  \begin{align*}
    \Bigl| \mathscr{R}( z^{x_1, \dots, x_c}_{ts} ) - (X_{ts}, \bbA_{ts}) \Bigr| \le C |t-s|^a;
  \end{align*}
this is actually what we shall do in the proof of the main theorem below.
\end{remark}

Our main result takes the form of a sufficient and necessary condition on the $1$-form $\mathsf{V}$ for equation \eqref{EqRDE} to have the reconstruction property. Only brackets of the form $\big[V_j,V_k\big]$, with $j<k$, appear in the matrix below.

\begin{theorem}[Reconstruction]
\label{ThmReconstruction}
Let $\mathsf{V}= (V_1,\dots,V_\ell)$ be a $\textrm{\emph{Lip}}^3(\R^d)$-valued $1$-form on $\R^\ell$.%
Set 
$$
  m:= \frac{\ell(\ell+1)}{2}.
$$
Then equation \eqref{EqRDE} has the reconstruction property if and only if there exists an integer $c$ and points $x_1,\dots, x_c$ in $\R^d$ such that the $(cd\times m)$ matrix
\begin{align*}
{\sf M} =  \begin{pmatrix}
      V_1(x_1) & \cdots & V_\ell(x_1) & [V_1,V_2](x_1) & \cdots &  [V_{\ell-1},V_\ell](x_1)  \\
      \vdots      &              & \vdots          & \vdots                  &            &  \vdots   \\
      V_1(x_c) & \cdots & V_\ell(x_c) & [V_1,V_2](x_c) & \cdots &  [V_{\ell-1},V_\ell](x_c)  
    \end{pmatrix}
\quad
  \end{align*}
has rank $m$. In this case $a$ in the definition of the reconstruction property can be chosen to be equal to $3 \alpha$.
We call ${\sf M}$ the \bf{reconstruction matrix}.
\end{theorem}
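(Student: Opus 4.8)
The key is the Taylor–Euler expansion \eqref{eq:eulerApproximation}, which tells us that for each starting point $x_j$,
\[
  \varphi_{ts}(x_j) - x_j = V_i(x_j)\,X^i_{ts} + V_kV_i(x_j)\,\bbX^{ik}_{ts} + O(|t-s|^{3\alpha}).
\]
Using the weak geometric hypothesis, write $\bbX_{ts} = \tfrac12 X_{ts}\otimes X_{ts} + \bbA_{ts}$, so that the second-order term splits into a part that is a smooth (quadratic) function of $X_{ts}$ and a part $\sum_{j<k}\bbA^{jk}_{ts}[V_j,V_k](x_j)$ that is linear in the antisymmetric data $\bbA_{ts}\in\R^{\ell(\ell-1)/2}$. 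Stacking the $c$ observation increments $\varphi_{ts}(x_j)-x_j$ into a single vector in $\R^{cd}$, the leading behaviour in $|t-s|$ is exactly ${\sf M}$ applied to the vector $(X_{ts},\bbA_{ts})\in\R^m$, up to a term which is $O(|t-s|^{2\alpha})$ but smooth in $X_{ts}$, plus the $O(|t-s|^{3\alpha})$ remainder. So the heart of the matter is a linear-algebra statement about when ${\sf M}$ is left-invertible.

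\textbf{Sufficiency.} Suppose some choice of $c$ and $x_1,\dots,x_c$ makes ${\sf M}$ of rank $m$; fix a left inverse ${\sf M}^+$ (so ${\sf M}^+{\sf M}=\mathrm{Id}_{\R^m}$). The plan is to define $\mathscr{R}$ (in the sense of Remark \ref{rem:lieRoughPath}) by applying ${\sf M}^+$ to the observation vector \emph{after} subtracting the explicitly computable quadratic-in-$X$ contributions. The subtlety is that those quadratic terms themselves involve $X_{ts}$, which we do not yet know — so one first recovers $X_{ts}$ to leading order by applying ${\sf M}^+$ to $z^{x_1,\dots,x_c}_{ts}$ directly (the quadratic and remainder terms contribute $O(|t-s|^{2\alpha})$, which is enough to get $X_{ts}$ up to $O(|t-s|^{2\alpha})$), then bootstrap: plug this approximation of $X_{ts}$ back into the quadratic terms, subtract, and reapply ${\sf M}^+$ to extract $(X_{ts},\bbA_{ts})$ up to the desired $O(|t-s|^{3\alpha})$. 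One must check the error propagates correctly — since the quadratic term is Lipschitz in $X_{ts}$ on the relevant bounded range (using $\|\bfX\|_\alpha\le M$), an $O(|t-s|^{2\alpha})$ error in the input $X_{ts}$ yields an $O(|t-s|^{2\alpha}\cdot|t-s|^{\alpha}) = O(|t-s|^{3\alpha})$ correction, exactly matching the remainder. This gives the reconstruction property with $a = 3\alpha$.

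\textbf{Necessity.} Conversely, suppose that for every finite family $x_1,\dots,x_c$ the matrix ${\sf M}$ has rank $<m$, i.e.\ there is always a nonzero vector $(\xi,\beta)\in\R^\ell\oplus\R^{\ell(\ell-1)/2}$ in the joint kernel $\bigcap_j \ker\big({\sf M}\text{-row-block at }x_j\big)$. One builds two distinct weak geometric rough paths $\bfX$ and $\bfX'$ with the same bound $M$, whose $(X,\bbA)$ differ by a direction lying in this common kernel (e.g.\ take $\bfX$ the lift of a suitable smooth path and perturb the antisymmetric level, or perturb both levels along $(\xi,\beta)$ at an appropriate scale in $|t-s|$), such that the observed flows agree to order $|t-s|^{3\alpha}$ — indeed by the Taylor–Euler expansion the observations only see ${\sf M}(X_{ts},\bbA_{ts})$ plus controlled errors, so the two signals produce the same $z^{x_1,\dots,x_c}$ up to $O(|t-s|^{3\alpha})$. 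Then no function $\mathcal{X}$ can satisfy \eqref{EqConditionReconstruction} with exponent $a>1$ for both, contradicting the reconstruction property; care is needed here because the definition quantifies over \emph{all} $c$ and all point configurations, so the counterexample rough paths must be constructed uniformly, which is possible precisely because the kernel obstruction is present for every configuration. The main obstacle I anticipate is this necessity direction: making the two-rough-path counterexample genuinely weak geometric, honestly $\alpha$-Hölder with a uniform norm bound, and indistinguishable to the required order simultaneously for every admissible observation scheme — the geometric/algebraic structure (that brackets $[V_j,V_k]$ and the $V_i$ are precisely the quantities probed) must be exploited carefully, likely via the flow representation $\mu_{ts}=\exp(\sum X^i_{ts}V_i + \sum \bbA^{jk}_{ts}[V_j,V_k])$ from Section \ref{SectionRoughPaths}.
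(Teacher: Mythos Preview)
Your sufficiency argument is correct and actually more explicit than either of the paper's two proofs. The paper defines the reconstruction map implicitly, by \emph{minimizing} $\big|\varphi_{ts}(y)-\Psi_y(A,B)\big|$ over a small ball, where $\Psi_y$ is either the time-$1$ flow $\exp\big(A^iV_i+B^{j<k}[V_j,V_k]\big)(y)$ or its second-order Taylor polynomial; a lower-Lipschitz lemma for $\Psi_y$ then forces any minimizer to lie within $O(|t-s|^{3\alpha})$ of $(X_{ts},\bbA_{ts})$. Your two-pass bootstrap with a fixed left inverse ${\sf M}^+$---extract $X_{ts}$ to $O(|t-s|^{2\alpha})$, subtract the now-computable quadratic term, apply ${\sf M}^+$ again---gives an explicit closed-form $\mathscr{R}$ and avoids optimization entirely; the error bookkeeping you sketch is sound and delivers $a=3\alpha$.

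Your necessity argument has a real gap. You argue that two rough paths differing along a kernel direction of ${\sf M}$ produce observations that ``agree to order $|t-s|^{3\alpha}$'', and conclude that no $\mathcal{X}$ can reconstruct both. But Definition~\ref{DefnReconstruction} imposes no regularity whatsoever on $\mathcal{X}$: if the two observation vectors are merely close rather than equal, nothing prevents a wild $\mathcal{X}$ from separating them. You need the observations to coincide \emph{exactly}. The paper secures this with a linear-algebra step you are missing: if the rank condition fails for every finite configuration, then by finite-dimensionality of $\R^m$ the span of all the row vectors $\big(V_1^i(y),\dots,[V_{\ell-1},V_\ell]^i(y)\big)$, as $y$ ranges over $\R^d$ and $i$ over $\{1,\dots,d\}$, is a proper subspace of $\R^m$. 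Any nonzero $v=(\xi,\beta)$ orthogonal to it satisfies $\sum_i\xi^iV_i+\sum_{j<k}\beta^{jk}[V_j,V_k]\equiv 0$ as a vector field on all of $\R^d$. The weak geometric rough path with $X_{ts}=\xi(t-s)$ and $\bbA_{ts}=\beta(t-s)$ then has approximate flow $\mu_{ts}=\exp(0)=\mathrm{Id}$, hence exact solution flow $\varphi_{ts}=\mathrm{Id}$, identical to that of the null rough path---so the observations agree exactly for every configuration, and reconstruction is impossible. (Your concern about building counterexamples ``uniformly'' across configurations is also misplaced: negating an existential over configurations only requires one counterexample per configuration, though the global kernel vector happens to dispatch them all at once.)
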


The above rank condition will hold for instance if $\ell=2$ and $\big(V_1,V_2,\big[V_1,V_2\big]\big)$ forms a free family at some point -- for which we need $d\ge 3$. One can actually prove, by classical \emph{transversality arguments}, that if $x_1,\dots,x_m$ are any given family of $m=\frac{\ell(\ell+1)}{2}$ distinct points in $\R^d$, then the set of tuples $\big(V_1,\dots,V_\ell\big)$ of $\textrm{Lip}^3$-vector fields on $\R^d$ for which the reconstruction matrix has rank $m$ is dense in $\big(\textrm{Lip}^3\big)^\ell$. This means that one can always reconstruct the rough signal in a ``generic'' rough differential equation from observing its solution flow at no more than $m$ points. (See the books by Hirsch \cite{Hirsch} or Zeidler \cite{Zeidler} for a gentle introduction to transversality-type arguments.) This genericity result obviously does not mean that any tuple $\big(V_1,\dots,V_\ell\big)$ of vector fields enjoys that property, as the above example with the constant vector fields corresponding to the canonical basis shows.

\medskip

{\sc Examples.} Here are a few illustrative examples where Theorem \ref{ThmReconstruction} applies and reconstruction is possible.

\begin{enumerate}
   \item Note that the above condition on the reconstruction matrix is unrelated to H\"ormander's bracket condition, and that there is no need of any kind of ellipticity or hypoellipticity for Theorem \ref{ThmReconstruction} to apply. If a $\textrm{Lip}^3(\R^d)$-valued $1$-form on $\R^\ell$ has the reconstruction property, its trivial extension $\widetilde{\sf V} = \big(\widetilde{V}_1,\dots,\widetilde{V}_\ell\big)$ to vector fields on $\R^{d+1}\simeq\R^d\times\R$, with the $\widetilde{V}_i = (V_i,0)$, does not involve a hypoelliptic system while is still has the reconstruction property. As another example of a non-elliptic control system satisfying the assumptions of Theorem \ref{ThmReconstruction}, consider in $\R^3$, with coordinates $(x,y,z)$, the following three vector fields
$$
V_1(x,y,z) = yz\partial_x, \quad V_2(x,y,z) = xz\partial_y, \quad V_3(x,y,z) = xy\partial_z.
$$
Then
$$
\big[V_1,V_2\big] = z^2(y\partial_y - x\partial_x),\quad \big[V_1,V_3\big] = y^2(x\partial_x - z\partial_z),\quad \big[V_2,V_3\big] = x^2(z\partial_z - y\partial_y).
$$
Here $m=6$, and it is easily checked that taking two observation points (i.e. $c=2$), such as the points with coordinates $(1,1,1)$ and $(1,2,3)$, the reconstruction matrix has rank $6$. \vspace{0.2cm}

      \item 
        
        Hypoellipticity (or ellipticity) is also not sufficient for Theorem \ref{ThmReconstruction} to hold.
        Indeed consider
      $$
      X_i = \partial_{x_i}+2y_i\,\partial_t, \quad Y_i = \partial_{y_i}-2x_i\,\partial_t,      
      $$
in $\R^{2d+1}$ with coordinates $x\in\R^d,\,y\in\R^d$ and $t\in\R$, used in a sub-Riemannian setting to define the Kohn Laplacian $\sum_{i=1}^d\big(X_i^2+Y_i^2\big)$. They satisfy 
$$
\big[X_i,X_j\big] = 0, \quad \big[Y_i,Y_j\big] = 0, \quad \big[X_i,Y_j\big] = -4\delta_i^j\,\partial_t,
$$
so the reconstruction matrix is always degenerate.

\end{enumerate}

\medskip

Our method of proof is best illustrated with the example of the rolling ball -- see \cite{BrockettDai} for a thorough treatment and
\cite{LyonsQian} for its introduction in a rough path setting. This equation describes the motion of a ball with unit radius rolled on a table without slipping. The position of the ball at time $t$ is determined by the orthogonal projection $x_t\in\R^2$ of the center of the ball on the table (i.e. the point touching the table, with the latter identified with $\R^2$), and by a $(3\times 3)$ orthonormal matrix $\Phi_t\in \OO(\R^3)$ giving the orientation of the ball. Set 
$$
A_1 = \begin{pmatrix}     
            0 & 0 & 1 \\
            0 & 0 & 0 \\
            -1 & 0 & 0  
           \end{pmatrix}
\quad \textrm{and }\quad
A_2 = \begin{pmatrix}     
            0 & 1 & 0 \\
            -1 & 0 & 0 \\
            0 & 0 & 0  
           \end{pmatrix}.
$$
We define right invariant vector fields $V_1,V_2$ on $\OO(\R^3)$ by the formula
$$
V_1(M) = A_1M, \quad V_2(M)=A_2M,
$$
for any $M\in \OO(\R^3)$. The non-slipping assumption on the motion of the ball relates the evolution of the path $x_\bullet$ to that of $\Phi_\bullet$, when the path $x_\bullet$ is $\mcC^1$, as follows
\begin{equation}
\label{EqRollingBallC1}
d\Phi_t = V_1(\Phi_t)\,dx^1_t + V_2(\Phi_t)\,dx^2_t.
\end{equation}
This equation makes perfect sense when $x_\bullet$ is replaced by a rough path $\bfX$ and the equation is understood in a rough path sense. Set $\mathsf{V}=(V_1,V_2)$. Working with invariant vector fields, the solution flow to the rough differential equation
 \begin{equation}
\label{EqRollingBallRDE}
d\varphi_t = \mathsf{V}(\varphi_t)\,{\bfX}(dt)
\end{equation}
is given by the map
$$
\varphi_\bullet : g\in \OO(\R^3) \mapsto \Phi^0_\bullet\,g,
$$
where $\Phi^0_\bullet$ is the solution path to the rough differential equation \eqref{EqRollingBallRDE} started from the identity. We know from the work of Strichartz \cite{Strichartz} on the Baker-Campbell-Dynkin-Hausdorff formula that the solution to the time-inhomogeneous ordinary differential equation \eqref{EqRollingBallC1} is formally given by the time-$1$ map of a {\it time-homogeneous} ordinary differential equation involving a vector field explicitly computable in terms of $V_1,V_2$ and  their brackets, and the iterated integrals of the signal $x_\bullet$, under the form of an infinite series. Truncating this series provides an approximate solution whose accuracy can be quantified precisely under some mild conditions on the driving vector fields. This picture makes perfect sense in the rough path setting of equation \eqref{EqRollingBallRDE} and forms the basis of the flow method put forward in \cite{BailleulFlows}. In the present setting, given a 2-dimensional rough path $\bfX$, with L\'evy area process $\bbA_\bullet$,
and given $0\leq s\leq t\leq 1$, denote by $\psi_{ts}$ the time-$1$ value of the solution path to the ordinary differential equation 
$$
  dz_u = X^1_{ts}V_1(z_u) + X^2_{ts}V_2(z_u) + \bbA_{ts} \big[V_1,V_2\big](z_u), \quad 0\leq u\leq 1,
$$
in $\OO(\R^3)$ started from the identity; that is
$$
\psi_{ts} = \exp\Big(X^1_{ts}A_1 + X^2_{ts}A_2 + \bbA_{ts} \big[A_2,A_1\big]\Big).
$$
Write $\varphi_{ts}$ for $\varphi_t\varphi_s^{-1}$. Then it follows from the results in \cite{BailleulFlows} that there exists some positive constant $c_1$ such that the inequality
\begin{equation}
\label{EqRollingBallApproxFlow}
\big\|\varphi_{ts}-\psi_{ts}\big\|_\infty \leq c_1|t-s|^{3 \alpha}
\end{equation}
holds for all $0\leq s\leq t\leq 1$. Since the vectors $A_1,A_2,\big[A_2,A_1\big]$ form a basis of the vector space $\mcA_3$ of anti-symmetric $(3\times 3)$ matrices, and the exponential map is a local diffeomorphism between a neighbourhood of $0$ in $\mcA_3$ and a neighbourhood of the identity in $\OO(\R^3)$, we get back the coefficient $X^1_{ts},X^2_{ts}$ and $\bbA_{ts}$ from the knowledge of $\varphi_{ts}$ and  relation \eqref{EqRollingBallApproxFlow}, up to an accuracy of order $|t-s|^{3 \alpha}$. This shows that one can reconstruct $\bfX$ from $\varphi_\bullet$, in the sense of Definition \ref{DefnReconstruction}, as the diagonal terms of $\bbX_{ts}$ are given in terms of $X_{ts}$ (see Remark \ref{rem:lieRoughPath}).
One could argue that perfect knowledge of $\varphi_{ts}$ may seem unrealistic from a practical point of view. Note that the above proof makes it clear that it is sufficient to know $\varphi_{ts}$ up to an accuracy of order $|t-s|^{3 \alpha}$ to get the reconstruction result.

\smallskip

\section{Proofs of the reconstruction theorem}
\label{SectionProofs}

\subsection{Proof I}
\label{SubsectionProofThm}

The first proof we give is based on the basic approximation method put forward in \cite{BailleulFlows} to construct the solution flow to a rough differential equation, and used independently later in \cite{BoutaibGyurkoLyonsYang} and \cite{LyonsYang}. As in the rolling ball example, it rests on the fact that one can obtain a good approximation of the solution flow $\varphi_{ts}$ to the rough differential equation \eqref{EqRDE} by looking at the time-$1$ map of an auxiliary time-homogeneous ordinary differential equation constructed from the vector fields $V_i$, their brackets and ${\bfX}_{ts}$. More specifically, let $\psi_{ts}$ stand for the time 1 map of the ordinary differential equation
\begin{equation}
\label{EqODEApprox}
dz_u = \sum_{i=1}^\ell X^i_{ts}V_i(z_u) + \sum_{1\leq j<k\leq \ell}\bbA^{jk}_{ts}\big[V_j,V_k\big](z_u), \quad 0\leq u\leq 1,
\end{equation}
that associates to any $x\in\R^d$ the value at time 1 of the solution to the above equation started from $x$. Then, by Theorem \ref{ThmLyonsUniversal}, there exists a positive constant $c_1$ such that one has
\begin{equation}
\label{EqRDEApprox}
\big\|\varphi_{ts}-\psi_{ts}\big\|_\infty \leq c_1|t-s|^{3 \alpha},
\end{equation}
for all $0\leq s\leq t\leq 1$. The constant $c_1$ depends only on $||V||_{\Lip^3}$ and any upper bound $M$ on the rough path norm of $\bfX$. We write formally
$$
\psi_{ts} = \exp\Big(X^i_{ts}V_i + \bbA^{j<k}_{ts}\big[V_j,V_k\big]\Big),
$$
and set $m=\frac{\ell(\ell+1)}{2}$. Working with $s$ and $t$ close to each other, we expect the coefficients of ${\bfX}_{ts}$ appearing in equation \eqref{EqODEApprox} to lie in any a priori given compact neighbourhood $\mcU$ of $0$ in $\R^m$. The simplest idea to get them back from the knowledge of $\varphi_{ts}$ is then to try and minimize over $\mcU$ the quantity
\begin{align}
  \label{eq:minimizeFlow}
  \Big\|\varphi_{ts}-\exp\Big(A^iV_i + B^{j<k}\big[V_j,V_k\big]\Big)\Big\| _\infty.
\end{align}

\medskip

\begin{remark}
  \label{rem:dossSussmann}
  Note that the ``approximation scheme'' $\psi_{ts}$ is equal to $\varphi_{ts}$ itself in the very special case where $d=1$ and $\ell=1$
  (Actually only $\ell=1$ is necessary ..), by the well-known Doss-Sussmann representation. So if in that case there is a point $y \in \R$ with $V_1(y)\neq 0$, the map $\frak{f} : a\in\R\mapsto \exp\big(aV_1\big)(y)$, is a local diffeomorphism between a neighbourhood of $0$ in $\R$ and a neighbourhood $\mcV$ of $y$ in $\R$. One thus has $X_{ts}=\frak{f}^{-1}\big(\varphi_{ts}(y)\big)$, for $s$ and $t$ close enough for $\varphi_{ts}(y)$ to be in $\mcV$. The reconstruction is perfect in that case. (Note that a $1$-dimensional rough path does not have an ``area''.)
\end{remark}

\medskip

\begin{proof}[Proof of Theorem \ref{ThmReconstruction}]

\textbf{ Sufficiency. }
Assume for the moment $d \ge m$, and suppose that at some point $y\in\R^\dode$ the vectors 
$$
\Big(V^i(y),\big[V_j,V_k\big](y)\,;\,1\leq i\leq \ell, \,1\leq j < k \leq \ell\Big),
$$ 
are independent. Define a map $\Psi_y$ from $\R^m$ to $\R^d$ setting
$$
\Psi_y(A,B) = \exp\Big(A^iV_i + B^{j<k}\big[V_j,V_k\big]\Big)(y).
$$
By Lemma \ref{lem:flowDiffeo} there exists two explicit positive constants $\epsilon_1, \epsilon_2$, depending only on the $\Lip^3$-norm of $V_1,\dots,V_\ell$, such that for for any two points ${\bf a},{\bf a}'$ in the ball $\mcU := B_{\epsilon_1}(0)$ of $\R^m$, we have
\begin{align}
  \label{eq:lowerBound}
  \big\|\Psi_y({\bf a})-\Psi_y({\bf a}')\big\| \geq \epsilon_2 \|{\bf a}-{\bf a}'\|.
\end{align}

We claim that any minimizer $({\sf A}, {\sf B})$ in $\mcU$ of the expression
\begin{equation}
\label{EqApproachFlow}
\Big|\varphi_{ts}(y)-\exp\Big(A^iV_i + B^{j<k}\big[V_j,V_k\big]\Big)(y)\Big|
\end{equation}
satisfies the identity
\begin{equation}
\label{EqAlmostRP}
({\sf A}, {\sf B}) - \big(X_{ts},\bbX_{ts}\big) = O\Big(|t-s|^{3 \alpha}\Big),
\end{equation}
for $t-s$ small enough, with a constant in the $O(\cdot)$ term independent of the minimizer. Assume, by contradiction, the existence for every $M>0$ and $\delta>0$, of times $(s,t)$ with $0\leq t-s\leq \delta$, and some minimizer $\big({\sf A}_{ts}, {\sf B}_{ts}\big)$ in $\mcU$ such that 
$$
\Big|\big({\sf A}_{ts}, {\sf B}_{ts}\big) - \big(X_{ts},\bbX_{ts}\big)\Big| \geq M |t-s|^{3 \alpha}.
$$
Then the inequality
$$
\Big|\Psi_y\big({\sf A}_{ts}, {\sf B}_{ts}\big) - \Psi_y \big(X_{ts},\bbX_{ts}\big)\Big| \geq \epsilon_2 M\,|t-s|^{3 \alpha}
$$
would follow from \eqref{eq:lowerBound}, giving, for a choice of $M =\frac{2c_1}{\epsilon_2}$, the conclusion
$$
\Big|\Psi_y\big({\sf A}_{ts}, {\sf B}_{ts}\big) - \varphi_{ts}(y)\Big| > c_1\,|t-s|^a,
$$
contradicting identity \eqref{EqRDEApprox}, where $\big(X_{ts},\bbX_{ts}\big)$ belongs to $\mcU$ for $\delta$ small enough, and the fact that $\big({\sf A}_{ts}, {\sf B}_{ts}\big)$ is a minimizer. This proves Theorem \ref{ThmReconstruction} in the special case where $d \geq m$ and where for some $y\in\R^\dode$ the family $\big(V^i(y),\big[V_j,V_k\big](y)\,;\,1\leq i\leq \ell, \,1\leq j < k \leq \ell\big)$ is free.

\medskip

To handle the general case, identify $\big(\R^d\big)^c$ and $\R^{cd}$, and denote by $z=(y_1,\dots,y_c)$ a generic element of $\R^{cd}$, with $y_i\in\R^d$. Introduce the vector fields ${\sf V}_i$ on $\R^{cd}$, given by the formula
$$
W_i(z) = \begin{pmatrix}     
            V_i(y_1) \\
            \vdots \\
            V_i(y_c)
           \end{pmatrix}.
$$
These vector fields satisfy, under the assumptions of Theorem \ref{ThmReconstruction}, the restricted assumptions under which we have proved Theorem  \ref{ThmReconstruction} above. So this special case applies and implies the general case. The above proof shows in particular that 
\begin{equation*}
\Big|\big({\sf A}_{ts}, {\sf B}_{ts}\big) - \big(X_{ts},\bbX_{ts}\big)\Big| \leq \frac{2c_1}{\epsilon_2}\,|t-s|^{3 \alpha}. 
\end{equation*}

\textbf{ Necessity. }

  Define
  \begin{align*}
    Z := \spann\left\{ V_1^i(y), \dots, V_\drp^i(y), [V_1, V_2]^i(y), \dots, [V_{\drp-1}, V_\drp]^i(y) : i=1,\dots,\dode, y \in \R^\dode, i=1,\dots,\dode \right\}.
  \end{align*}
  If the assumption of the theorem is not satisfied then $Z \not= \R^m$.
  Hence pick $v \in Z^T$, $v \not= 0$.
  Then for every $\mathbf{a} \in \R^m$, every $y \in \R^\dode$
  \begin{align*}
    \Psi_y( \mathbf{a} )
    =
    \Psi_y( \mathbf{a} + v ).
  \end{align*}
  Hence the null rough path $(X,\mathbb{X}) := (0,0)$ and the rough path
  \begin{align*}
    (\bar X_{ts}, \mathbb{\bar X}_{ts})
    :=
    \left( 
    \left( \begin{array}{c}
              v^1 \\
              \dots \\
              v^\ell
            \end{array} \right) (t-s), 
    \left(
    \begin{matrix}
      0                   &         v^{\ell+1} & \dots &         v^{\ell+d-1} \\
      -        v^{\ell+1} & 0                  & \dots & \dots \\
      \dots               & \dots              & \dots & v^m \\
      -      v^{\ell+d-1} & \dots              & -v^m & 0
    \end{matrix}
    \right)
    (t-s)
    \right),
  \end{align*}
  have the same effect on the rough differential equation.
  Hence reconstruction is not possible.
\end{proof}

\begin{remarks}
  \label{rem:afterMainTheorem}

\noindent {\bf 1.} {\it The above proof shows that any minimizer to problem \eqref{EqApproachFlow} satisfies identity \eqref{EqAlmostRP} if $0\leq t-s \leq \delta$, provided $\delta>0$ is chosen such that $\big(X_{ts},\bbX_{ts}\big)\in\mcU$. The results of \cite{BailleulFlows} show that $\delta$ is of order $\big(1+\|{\bfX}\|_\alpha\big)^{-3}$; this quantity is a priori unknown since $\bfX$ itself is unknown. In practice, one should work with a sufficiently small a priori given $\delta$ and refine it if necessary.}

\medskip

\noindent {\bf 2.} {\it Note that the use of ordinary differential equations as a tool makes the above method perfectly suited for dealing with rough differential equations \eqref{EqRDE} with values in manifolds. This would not have been the case if we had replaced the exponential map used to define $\psi_{t}$ by a Taylor polynomial (as we shall
do below, in the second proof of Theorem \ref{ThmReconstruction}), which does not have any intrinsic meaning on a manifold. Denote by $Vf$ the derivative of a function $f$ in the direction of a vector field $V$. In the manifold setting, the reconstructability condition takes the following form. There exists a (smooth) function on the manifold and some points $x_1,\dots,x_c$ such that the following matrix has rank $m$,
\begin{align*}
    \begin{pmatrix}
      \big(V_1f\big)(x_1) & \cdots & \big(V_\ell f\big)(x_1) & \big([V_1,V_2]f\big)(x_1) & \cdots &  \big([V_{\ell-1},V_\ell]f\big)(x_1)  \\
      \vdots      &              & \vdots          & \vdots                  &            &  \vdots   \\
      \big(V_1f\big)(x_c) & \cdots & \big(V_\ell f\big)(x_c) & \big([V_1,V_2]f\big)(x_c) & \cdots &  \big([V_{\ell-1},V_\ell]f\big)(x_c)  
    \end{pmatrix}.
\quad
  \end{align*}
With an eye back on the rolling ball example, it suffices in that case to observe the flow at only one point and to take as function $f$ the logarithm map, from $\OO(\R^3)$ to the linear space of antisymmetric $(3\times 3)$ matrices.}
\end{remarks}

\subsection{Proof II}
\label{SubsectionReconstructionAlgorithm}

The proof of Theorem \ref{ThmReconstruction} relied on the flow approximation
to rough differential equations. In this paragraph we show that the same result
can be obtained using an Euler-type approximation, which leads
to a computationally less expensive solution.

\begin{proof}[Second proof of Theorem \ref{ThmReconstruction}]
  \textbf{ Sufficiency. }

  Assume $\dode \ge m$ and the existence of a point $y\in\R^\dode$ where the vectors $\big(V^i(y),\big[V_j,V_k\big](y)\,;\,1\leq i\leq \ell, \,1\leq j < k \leq \ell\big)$ form a free family. Instead of approximating the flow of the rough differential equation by the time 1 map $\Psi_y$ we use the Taylor approximation
  \begin{align*}
    \Phi_y( A, B ) := y + A^i V_i(y) + \Big(B^{j < k} \left[ V_j, V_k \right] + \frac{1}{2} A^i A^j V_i V_j\Big)(y).
  \end{align*}
  By Lemma \ref{lem:taylorDiffeo}, there exist some positive constants $\epsilon_1, \epsilon_2$ such that we have 
  \begin{align*}
    \Big|\Phi_y({\bf x})-\Phi_y({\bf x}')\Big| \geq \epsilon_2 |{\bf x}-{\bf x}'|,
  \end{align*}
  for any pair of points ${\bf x},{\bf x}'$ in the ball $\mcU := B_{\epsilon_1}( 0 )$ of $\R^m$. The proof then follows the exact same steps as above.
\end{proof}

\subsection{The reconstruction algorithm}

Based on Proof II, each step of the reconstruction scheme can be described in the following simple terms.

\smallskip

{\sf
  1. Observe the solution increments $\phi_{ts}(x_1), \dots, \phi_{ts}(x_c)$
      started from the points $x_1, \dots x_c$ as given in the statement of Theorem 2.

  2. Minimize the quadratic target function
  \begin{align*}
    \sup_{\ell=1,\dots,c} \Big| \phi_{t,s}(x_\ell) - \Big\{A^i V_i(x_\ell) + \Big(B^{j < k} \left[ V_j, V_k \right] + \frac{1}{2} A^i A^j V_i V_j\Big)(x_\ell)\Big\}\Big|.
  \end{align*}
  The minimizer $\mathsf{A},\mathsf{B}$ is an approximation for $\mathcal{R}_{s,t}$ (defined in Remark \ref{rem:lieRoughPath}).
}

\section{Examples}
\label{SectionExamples}
We present some examples of dynamical systems where the conditions of Theorem \ref{ThmReconstruction} hold.
The examples are physical, in the sense that they model dynamics that can be realized as concrete machines.

\subsection{Rolling ball and its implementations}
The rolling ball was already considered at the end of Section \ref{SectionMainResult}.

Another incarnation is available in the field of quantum control.
In fact the problem studied in \cite{bib:boscainEtAl2205}
is of the same form (see equation (13) ibid).
We should point out a caveat in this case: due to quantum effect the dynamics of the system
are only well-described by the differential equation of the rolling ball for forcing signals with derivative of small amplitude
.\footnote{Personal communication with Ugo Boscain.}
This is of course not the case for a general rough path.

\subsection{Continuously variable transmission}
\begin{figure}[h!]
  \centering
  \includegraphics[width=0.45\textwidth]{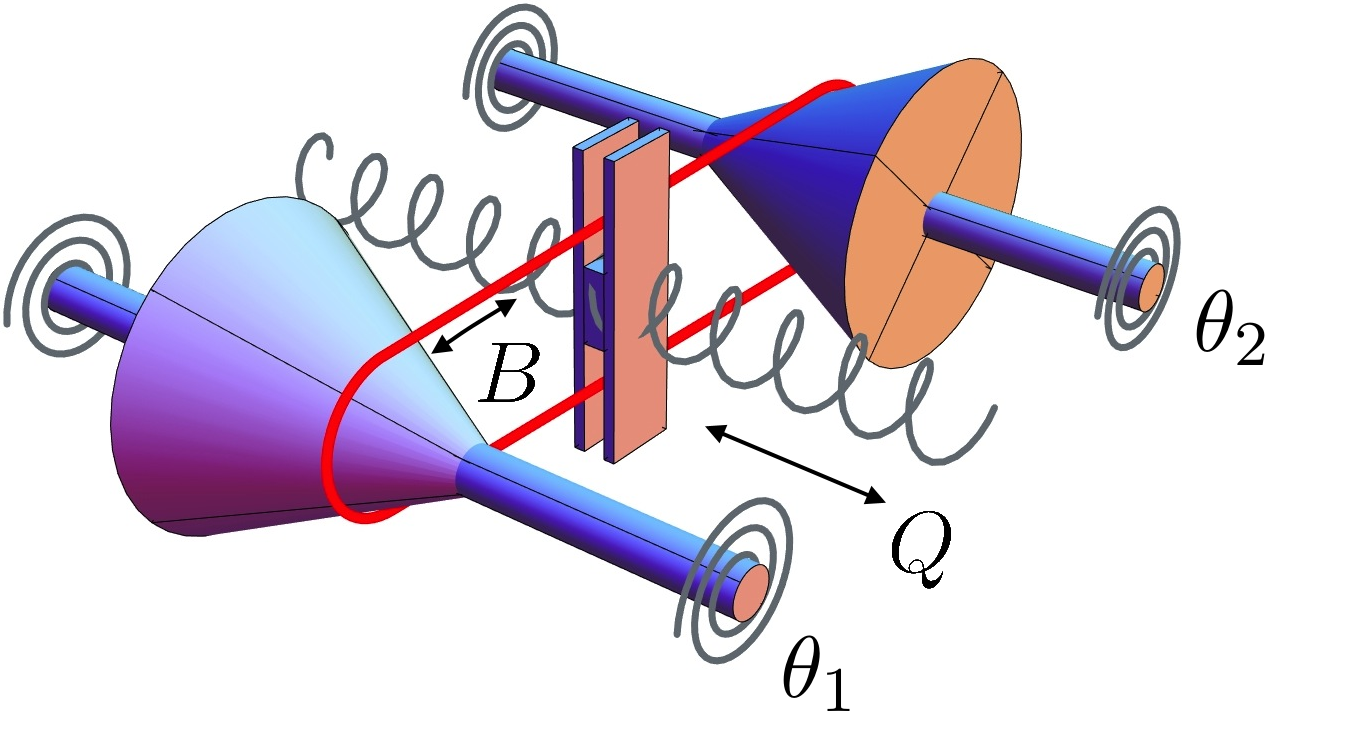}\\
  \caption{
    An implementation of
  the continuously variable transmission. The image is from \cite{ModinVerdier}, with kind permission of the authors.
    }
  \label{fig:cvt}
\end{figure}
The uncontrolled dynamics of the system illustrated in Figure \ref{fig:cvt}
is studied in \cite{ModinVerdier}.
The controlled setting is as follows.
The statespace is given by $\theta_1, \theta_2$ the rotation angles of the two cones,
$B$ the vectical position 
of the belt and $Q$ the translation of the belt, constrained to be in an interval, say $(0,1)$.
The controller acts directly on $B$ and $Q$ and so for
$Q(t) \in (0,1)$ the corresponding equations are given by
\begin{align*}
  d\theta_1(t) &= \frac{1}{Q(t)} dB(t), \\
  d\theta_2(t) &= \frac{1}{1-Q(t)} dB(t), \\
  dB(t) &= dX^1(t), \\
  dQ(t) &= dX^2(t).
\end{align*}
With $x := (\theta_1, \theta_2, B, Q)$ this reads as
\begin{align*}
  dx_t = V_i(x_t) dX^i_t,
\end{align*}
where
\begin{align*}
  V_1(x) = 
  \left(
  \begin{matrix}
    \frac{1}{q} \\
    \frac{1}{1-q} \\
    1 \\
    0
  \end{matrix}
  \right),
  \quad
  V_2(x) = 
  \left(
  \begin{matrix}
    0 \\
    0 \\
    0 \\
    1.
  \end{matrix}
  \right),
\end{align*}
We calculate
\begin{align*}
  \left[ V_1, V_2 \right](x)
  =  
  \left(
  \begin{matrix}
    \frac{1}{q^2} \\
    -\frac{1}{(1-q)^2} \\
    0 \\
    0
  \end{matrix}
  \right).
\end{align*}
The condition of Theorem \ref{ThmReconstruction} is hence satisfied with $c=1$ and any point
$x = (\theta_1,\theta_2,q,b)$ with $q \in (0,1)$.

\subsection{Unicycle}
\begin{figure}[h!]
  \centering
  \includegraphics[width=0.25\textwidth]{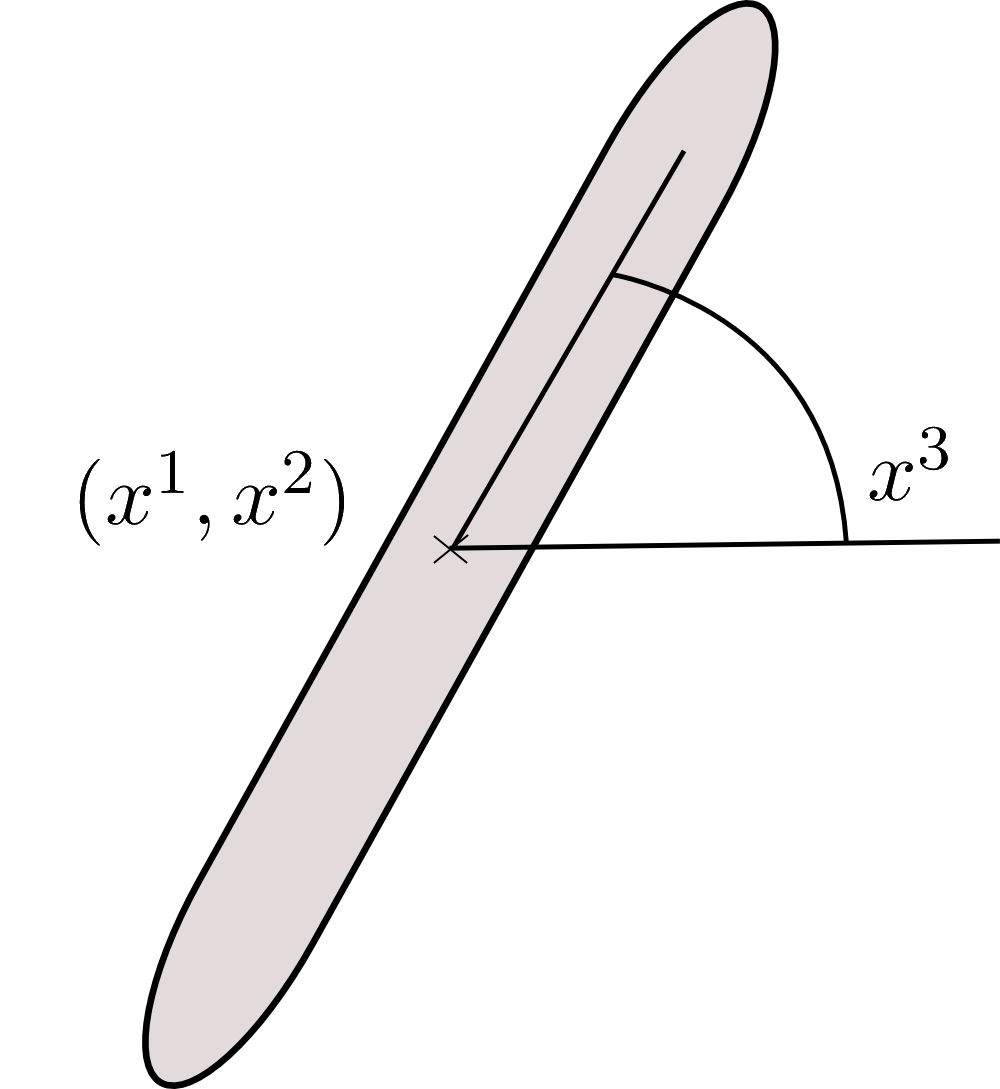}\\
  \caption{
    The unicycle.
    }
  \label{fig:unicycle}
\end{figure}
Consider a unicycle on the sphere with position $(x^1,x^2)$ and orientation $x^3$ sketched in Figure \ref{fig:unicycle}.
The controller acts by changing the orientation $x^3$ and by moving the unicycle foward or backward along the current orientation.
The dynamics hence reads as (see Example 4.3.2 in \cite{bibBloch})
\begin{align*}
  dx_t = V_1(x_t) dX^1_t + V_2(x_t) dX^2_t,
\end{align*}
where
\begin{align*}
  V_1(x) :=
  \left(
  \begin{matrix}
    \cos( x^3 ) \\
    \sin( x^3 ) \\
    0
  \end{matrix}
  \right),
  \qquad
  V_2(x) :=
  \left(
  \begin{matrix}
    0 \\
    0 \\
    1
  \end{matrix}
  \right).
\end{align*}

Then
\begin{align*}
  \left[ V_1, V_2 \right](x)
  =
  \left(
  \begin{matrix}
    \sin(x^3) \\
    -\cos(x^3) \\
    0
  \end{matrix}
  \right).
\end{align*}
The condition of Theorem \ref{ThmReconstruction} is hence satisfied with $c=1$ and any point.

\section{Applications}
\label{SectionApplications}

We describe in this section two applications of the reconstruction Theorem \ref{ThmReconstruction}.

\subsection{Filtering and maximum likelihood estimator}
\label{SubsectionFilteringAndMLE}

We give a brief overview on two recent results in the area of stochastic filtering and maximum likelihood estimation which both emphasize the need in different practical situations to measure signals in a rough path sense. The point of the present work is that if these real life signals can be used as input to an \emph{additional} physical system that is modelled by a rough differential equation satisfying the assumptions of our reconstruction theorem, then one can indeed have a good approximation of the rough signal, which suffices for practical purposes.

\medskip

{\bf a) Filtering.} Consider the following multi-dimensional two-component stochastic differential equation
\begin{align*}
  x_t &= x_0 + \int_0^t V(x_r,y_r) dr + \int_0^t V_j(x_r,y_r)\,{\circ dB^j_r} \\
     &\qquad\;\;\, + \int_0^t V'_k(x_r,y_r)\,{\circ dy^k_r}, \\
  y_t &= \int_0^t h(x_r, y_r)\,dr + W_t,
\end{align*}
where the path $y_\bullet$ is observed and the path $x_\bullet$ is unobserved. The letters $W$ and $B$ stand here for independent Brownian motions. The stochastic filtering problem consists in calculating the best guess (in $L^2$ sense) of the signal given the observation 
$$
\pi_t = \E\big[ f(x_t)\,\big|\,\sigma( y_s : s \le t ) \big].
$$
Here $f$ is some smooth nice enough function. (More generally one is interested in the \emph{distribution} of $x_t$ given the past of $y_\bullet$.) From a practitioner's point of view it is highly desirable that the estimation procedure be continuous in the observation path \cite{bibClark}. If the vector fields $V'_k$ are null (the uncorrelated case) or if $y_\bullet$ is 1-dimensional, it has been shown that this in fact the case: $\pi_t$ is continuous with respect to the path $y_{[0,t]}$, where distance is measured in supremum norm (see \cite{bibClark,bibDavis}). Counterexamples show that in the general case this is \emph{not} true. As recalled above in section \ref{SectionRoughPaths}, Stratonovich integrals with continuous semimartingale integrand and the corresponding rough paths integrals against the rough path lift of the integrand coincide whenever both make sense. So one can recast the above equation for the path $x_\bullet$ as a rough differential equation involving the rough path lift $\bf y_\bullet$ of the observed path $y_\bullet$. 

\begin{theorem}[Theorems 6 and 7 in \cite{CrisanDiehlOberhauser}]
Under appropriate assumptions on the vector fields\footnote{This refers to boundedness and sufficient smoothness; no bracket assumption as in Theorem \ref{ThmReconstruction} is needed for this result.}, there exists  a {\emph continuous} deterministic function $\frak f$ on the rough path space such that
  \begin{align*}
    \pi_t = {\frak f}\big(\mathbf{y}_{[0,t]}\big),
  \end{align*}
where $\mathbf{y}_\bullet$ is the Stratonovich lift of $y_\bullet$ to a rough path.
\end{theorem}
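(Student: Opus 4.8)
The plan is to derive the claimed continuity from the \emph{Kallianpur–Striebel formula} together with a pathwise (rough path) solution theory for the signal equation, integrating out the independent noise $B$ only at the very end. First I would pass to a reference measure $\tilde{\P}$ via Girsanov's theorem, chosen so that under $\tilde{\P}$ the observation $y_\bullet$ is a Brownian motion and (after the change of variables below) carries a law on which $B$ remains conditionally tractable; the likelihood is $Z_t = \exp\big(\int_0^t h(x_r,y_r)\,dy_r - \tfrac12\int_0^t|h(x_r,y_r)|^2\,dr\big)$, and Kallianpur–Striebel gives $\pi_t = \tilde{\E}\big[f(x_t)Z_t\,\big|\,\F^y_t\big]\big/\tilde{\E}\big[Z_t\,\big|\,\F^y_t\big]$. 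The task then reduces to exhibiting numerator and denominator as continuous functions of $\mathbf{y}_{[0,t]}$.

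Second, I would make the signal $x_\bullet$ pathwise in $\mathbf{y}$. Because the signal equation contains the term $V'_k(x,y)\circ dy^k$, I would first perform a random change of coordinates: let $\theta_t$ be the flow of the rough equation $d\theta = V'_k(\theta)\,\mathbf{y}(dt)$, which by Theorem \ref{ThmLyonsUniversal} depends continuously on $\mathbf{y}$, and set $\bar x_t := \theta_t^{-1}(x_t)$. Then $\bar x_\bullet$ solves an It\^o SDE driven only by $B$ and by $dr$, whose coefficients are smooth functions of $(\bar x_r,\theta_r,y_r)$; crucially this equation involves no cross iterated integrals between $B$ and $y$, which is exactly what decorrelates $\bar x$ from $y$ under $\tilde{\P}$ and justifies the conditioning used above. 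For $\tilde{\P}$-a.e.\ fixed realization of $B$ the equation for $\bar x$ is solved as a continuous function of $\mathbf{y}$, and hence $x_t=\theta_t(\bar x_t)$ is, for fixed $B$, a continuous function of $\mathbf{y}_{[0,t]}$.

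Third — and this is the main obstacle — I would show that $Z_t$ is continuous in $\mathbf{y}$ for fixed $B$. The difficulty is precisely that $\int_0^t h(x_r,y_r)\,dy_r$ is an It\^o integral, which is \emph{not} continuous in $y$ for the supremum topology, and this is what makes the classical, non-rough version of the statement false in the correlated case. Writing $h_r := h(x_r,y_r) = h(\theta_r(\bar x_r),y_r)$, I would split $\int_0^t h_r\,dy^k_r = \int_0^t h_r\circ dy^k_r - \tfrac12[h_\bullet,y^k]_t$; an It\^o expansion of $h_r$ identifies $[h_\bullet,y^k]_t$ as an absolutely continuous term whose integrand is a smooth function of $(\bar x_r,\theta_r,y_r)$, hence continuous in $\mathbf{y}$, while $\int_0^t h_r\circ dy^k_r$ coincides with the rough integral of the controlled path $r\mapsto h_r$ (controlled by $\mathbf{y}$ through the smooth dependence just established) against $\mathbf{y}$, and is therefore continuous in $\mathbf{y}$ by continuity of rough integration. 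Thus $f(x_t)Z_t = G_t(\mathbf{y};B)$ with $G_t(\cdot;B)$ continuous on rough-path space and $G_t(\mathbf{y};\cdot)$ measurable, and similarly $Z_t = G^0_t(\mathbf{y};B)$.

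Finally, I would integrate out $B$. Under $\tilde{\P}$ the conditional law of $B$ given $\F^y_t$ is a fixed law $\mu_B$ not depending on $\mathbf{y}$, so $\tilde{\E}[f(x_t)Z_t\mid\F^y_t]=\int G_t(\mathbf{y};b)\,\mu_B(db)=:\frak{f}_1(\mathbf{y}_{[0,t]})$ and $\tilde{\E}[Z_t\mid\F^y_t]=\int G^0_t(\mathbf{y};b)\,\mu_B(db)=:\frak{f}_0(\mathbf{y}_{[0,t]})$. Continuity of $\frak{f}_1$ and $\frak{f}_0$ follows from the $B$-wise continuity of $G_t,G^0_t$ and dominated convergence, for which one needs a bound $\sup_{\|\mathbf{w}\|\le M}|G_t(\mathbf{w};b)|\le\Psi_M(b)$ with $\Psi_M\in L^1(\mu_B)$; here the standing boundedness and smoothness hypotheses on the vector fields and on $h$ (the footnoted assumptions) supply the locally uniform moment estimates for the rough differential equation solution and for $Z_t$. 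Setting $\frak{f}:=\frak{f}_1/\frak{f}_0$, which is well defined and continuous since $\frak{f}_0>0$, Kallianpur–Striebel yields $\pi_t=\frak{f}\big(\mathbf{y}_{[0,t]}\big)$ $\tilde{\P}$-a.s., hence $\P$-a.s.\ because $\P\sim\tilde{\P}$, and $\frak{f}$ is deterministic and continuous on rough-path space, as claimed.
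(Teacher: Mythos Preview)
The paper does not prove this statement at all; it is quoted verbatim as Theorems~6 and~7 of \cite{CrisanDiehlOberhauser} and used only as motivation for the reconstruction problem. There is therefore no proof in the present paper to compare your attempt against.

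That said, your sketch is broadly the strategy of \cite{CrisanDiehlOberhauser}: Kallianpur--Striebel reduction, a Doss--Sussmann/flow transformation to strip off the $\circ\,dy$ term, identification of the It\^o integral in the likelihood with a rough integral plus a bracket correction, and finally integration over the independent noise. One point deserves care. In your second and fourth steps you write that ``for $\tilde\P$-a.e.\ fixed realisation of $B$'' the transformed signal $\bar x_\bullet$ and hence $G_t(\mathbf y;B)$ depend continuously on $\mathbf y$. After the flow transformation the equation for $\bar x$ is a genuine It\^o SDE in $B$ with $\mathbf y$-dependent coefficients; it is \emph{not} solvable pathwise in $B$ without further structure (you would need a rough lift of $B$ and then a joint rough path, which reintroduces exactly the cross-iterated integrals you are trying to avoid). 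The correct order is the opposite one: for each fixed rough path $\mathbf y$ solve the $\bar x$-SDE in $B$, obtain the random variable $G_t(\mathbf y;\cdot)$, and then prove $\mathbf y\mapsto\tilde\E[G_t(\mathbf y;B)]$ is continuous via SDE stability estimates in the coefficients together with uniform moment bounds on $Z_t$. This is what \cite{CrisanDiehlOberhauser} does, and it is also what makes your dominated-convergence argument go through; as written, the ``$B$-wise continuity'' you invoke is not available.
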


  Note that even if we had an observation model
  \begin{align*}
    dy = h(x) dt + \sigma(y) dW,
  \end{align*}
  with $\sigma$ satisfying the assumptions of Theorem \ref{ThmReconstruction} in the ideal case where $c = 1$, (for which we need $d_Y \ge d_W (d_W + 1) / 2$), observing just Y is \emph{still not enough}. This comes from the fact that the (random!) drift term introduces an error of order $|t-s|^1$. It is therefore indeed necessary to have an additional ``measuring device'', which is fed the observation $y$ and is modelled as an RDE satisfying the assumptions of Theorem \ref{ThmReconstruction}.

\bigskip

{\bf b) Statistics.} Consider now the problem of estimating the parameter $A \in \textrm{L}(\R^d)$ in the stochastic differential equation
\begin{align*}
  dy_t=A\,h\left(y_t \right) dt + \Sigma \left(y_t\right) dB_t
\end{align*}
driven by a Brownian motion $B$ or other Gaussian process, by observing $y_\bullet$ on some fixed time interval. Under appropriate conditions, the measures on path space for different $A$ are mutually absolutely continuous, so one can use the method of maximum likelihood estimation, leading to an estimator $\widehat{A}_T$ for $A$. Denote by $\mathbf{y}_\bullet$ the Stratonovich lift of $y_\bullet$ into a rough path.

\begin{theorem}[Theorem 2 in \cite{bibDiehlFrizMai}]
  Under appropriate assumptions on the functions $h$ and $\Sigma$, there exists a subset $\mathcal{D}$ of the space of weak geometric $\alpha$-H\"older rough paths and a {\emph continuous} deterministic function $\mathcal{A}_T: \mathcal{D} \to \textrm{L}(\R^d)$ such that $\mathbf{y}_\bullet$ belongs almost-surely to $\mathcal{D}$ and $\mathcal{A}_T({\mathbf{y}}_\bullet)$ is almost-surely equal to $\widehat{A}_T$.
\end{theorem}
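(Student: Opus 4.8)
The plan is to write the maximum likelihood estimator $\widehat A_T$ in closed form via Girsanov's theorem and then to recognise each of its ingredients as a continuous functional of the rough path lift $\mathbf y_\bullet$. Assume, as part of the standing hypotheses, that $a(y):=\Sigma(y)\Sigma(y)^\top$ is smooth and invertible, and work relative to the reference law $\P_0$ under which $dy_t=\Sigma(y_t)\,dB_t$; under appropriate conditions the laws $\P_A$ are mutually absolutely continuous with respect to $\P_0$. Girsanov's theorem expresses the log-likelihood of the parameter $A$ as the concave quadratic (in $A\in\textrm{L}(\R^d)$)
\[
L(A)=\int_0^T\big\langle A\,h(y_t),\,a(y_t)^{-1}dy_t\big\rangle-\tfrac12\int_0^T\big\langle A\,h(y_t),\,a(y_t)^{-1}A\,h(y_t)\big\rangle\,dt,
\]
so $\widehat A_T$ is the unique solution of the normal equations $\mathcal L_T(\widehat A_T)=m_T$: a non-degenerate linear system on $\textrm{L}(\R^d)$ whose coefficient operator $\mathcal L_T$ is a Riemann integral of a continuous function of the path, and whose right-hand side $m_T$ combines such Riemann integrals with the single It\^o integral $\int_0^T g(y_t)\,dy_t$ for a sufficiently regular matrix field $g$ built from $a^{-1}$ and $h$.

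The only ingredient that is not manifestly continuous in $y_\bullet$ — not even for the uniform norm — is this It\^o integral. To handle it I would pass through the It\^o--Stratonovich conversion
\[
\int_0^T g(y_t)\,dy_t=\int_0^T g(y_t)\circ dy_t-\tfrac12\int_0^T\big(Dg\cdot a\big)(y_t)\,dt,
\]
valid because $d\langle y\rangle_t=a(y_t)\,dt$; the correction on the right is again a Riemann integral of a continuous function of the path, hence continuous in $y_\bullet$ for the uniform norm, a fortiori in $\mathbf y_\bullet$. The Stratonovich integral, in turn, coincides almost surely with the \emph{rough integral} $\int_0^T g(\mathbf y)\,d\mathbf y$ against the Stratonovich lift $\mathbf y_\bullet$ — the standard identification of geometric rough integration with Stratonovich integration — which applies since $\mathbf y_\bullet$ is genuinely $\alpha$-H\"older for some $\alpha\in(\tfrac13,\tfrac12]$ and $t\mapsto g(y_t)$ is a controlled path with Gubinelli derivative $Dg(y_\bullet)$. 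By continuity of rough integration / the sewing map \cite{Lyons98,FrizHairer}, $\mathbf X\mapsto\int_0^T g(\mathbf X)\,d\mathbf X$ is continuous on the $\alpha$-H\"older rough path space. Hence $\mathcal L_T$ and $m_T$ extend to continuous functionals $\mathbf X\mapsto\mathcal L_T(\mathbf X),\,m_T(\mathbf X)$ on the space of weak geometric $\alpha$-H\"older rough paths, agreeing $\P$-almost surely with $\mathcal L_T$, $m_T$ when evaluated at $\mathbf y_\bullet$.

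I would then take $\mathcal D:=\{\mathbf X:\mathcal L_T(\mathbf X)\ \text{invertible}\}$, an open (hence Borel) subset of rough path space, and define $\mathcal A_T(\mathbf X):=\mathcal L_T(\mathbf X)^{-1}\big(m_T(\mathbf X)\big)$ on $\mathcal D$; this is continuous there, inversion being continuous on the invertible locus. It remains to check $\mathbf y_\bullet\in\mathcal D$ and $\mathcal A_T(\mathbf y_\bullet)=\widehat A_T$ almost surely. The second is immediate from the a.s.\ identities $\mathcal L_T(\mathbf y_\bullet)=\mathcal L_T$, $m_T(\mathbf y_\bullet)=m_T$ together with the normal equations. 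The first is a non-degeneracy statement: $\mathcal L_T$ is a.s.\ invertible because, for fixed $0\neq v\in\R^d$, the event that $t\mapsto\langle v,h(y_t)\rangle$ vanishes for almost every $t\le T$ has probability zero for a non-degenerate diffusion whose drift field $h$ is not annihilated on an open set — which is where the hypotheses on $h$ and $\Sigma$ enter (via the support theorem, or Blumenthal's $0$--$1$ law).

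The main obstacle is the middle step — turning the It\^o integral, a priori only a measurable functional of the driving noise, into a \emph{continuous} functional of the enriched path $\mathbf y_\bullet$. The delicate points are: (a) that the It\^o--Stratonovich correction is a genuine pathwise Riemann integral, which forces regularity hypotheses on $g$ and so on $a^{-1}$ and $h$, and uses the explicit quadratic variation $a(y_t)\,dt$; (b) that $g(y_\bullet)$ is a controlled path of the right regularity, so the rough integral against $\mathbf y_\bullet$ is well defined and equals the Stratonovich integral; and (c) the a.s.\ invertibility of $\mathcal L_T$, i.e.\ that the Fisher-information functional does not degenerate along the observed trajectory. Combined with Theorem \ref{ThmReconstruction}, this continuity on rough path space is what makes $\widehat A_T$ effectively observable from the solution flow of an auxiliary rough differential equation fed with the signal $y_\bullet$.
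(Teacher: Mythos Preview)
The paper does not prove this theorem at all: it is quoted verbatim as ``Theorem~2 in \cite{bibDiehlFrizMai}'' and used as a black box to motivate the reconstruction problem. There is therefore no ``paper's own proof'' to compare your proposal against.

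That said, your sketch is the natural one and is essentially the strategy of \cite{bibDiehlFrizMai}: write $\widehat A_T$ via Girsanov as the solution of a linear system whose only non-pathwise ingredient is an It\^o integral $\int_0^T g(y_t)\,dy_t$, convert it to Stratonovich plus a Riemann correction, identify the Stratonovich integral with the rough integral against the lift $\mathbf y_\bullet$, and invoke continuity of rough integration together with the openness of the invertible locus for $\mathcal L_T$. The points you flag as delicate --- regularity needed for the It\^o--Stratonovich correction, the controlled-path structure of $g(y_\bullet)$, and almost-sure non-degeneracy of $\mathcal L_T$ --- are exactly where the ``appropriate assumptions on $h$ and $\Sigma$'' in the statement do their work. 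So your proposal is correct in outline; there is simply nothing in the present paper to compare it to.
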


Again, there exist counterexamples that show that taking only the \emph{path} of the observation as an input does \emph{not} yield a robust estimation procedure. 

\medskip

In both examples the practitioner is hence left with the task of actually \emph{recording} the rough path $\mathbf{Y}$ in order to implement on a practical basis the above theoretical results. The reconstruction theorem provides conditions under which this can be done. This requires from the practitioner to observe another system where the signal of interest serves as an input.

\section{Appendix}
\label{SectionAppendix}

We gather in this appendix a few elementary lemmas that were used in the proof of the reconstruction theorem. We start with a basic result that is used in the two lemmas below.

\begin{lemma}
  \label{lem:appendixBoundOnD2}
Let $W : \R^p \times \R^\dode \to \R^\dode$ be a function of class $C^2$ such that the vector field $W(a,\cdot)$ is Lipschitz continuous for any fixed $a \in \R^p$. Let $\phi_{t}(a,\cdot)$ be the flow at time $t$ started at time $0$ to the ordinary differential equation
  \begin{align*}
    dx_t = W(a,x_t) dt.
  \end{align*}
  Then
  \begin{align*}
    c_0 := \sup_{|a| \le 1, x \in \R^\drp}\big| D_a^2 \phi_{t}(a,x)\big| < \infty.
  \end{align*}
\end{lemma}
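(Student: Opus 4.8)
The plan is to reduce the claim to a standard result on smooth dependence of ODE flows on parameters, combined with Gronwall-type estimates. Concretely, fix $|a|\le 1$ and $x\in\R^d$, write $\phi_t = \phi_t(a,x)$, and consider the first- and second-order variational equations obtained by differentiating $\dot\phi_t = W(a,\phi_t)$ in $a$. Setting $J_t := D_a\phi_t(a,x)$ (a $d\times p$ matrix) and $K_t := D_a^2\phi_t(a,x)$ (a $d\times p\times p$ object), one differentiates once to get
\begin{align*}
  \dot J_t = \big(D_x W\big)(a,\phi_t)\,J_t + \big(D_a W\big)(a,\phi_t), \qquad J_0 = 0,
\end{align*}
and once more to get a linear (in $K_t$) equation
\begin{align*}
  \dot K_t = \big(D_x W\big)(a,\phi_t)\,K_t + R_t, \qquad K_0 = 0,
\end{align*}
where $R_t$ is a polynomial expression in $D_x^2 W$, $D_x D_a W$, $D_a^2 W$ (all evaluated at $(a,\phi_t)$) and in $J_t$, quadratically in $J_t$.

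First I would record the hypotheses that make this work: $W(a,\cdot)$ Lipschitz uniformly is not quite enough as stated, so I would read ``Lipschitz continuous'' together with the $C^2$ assumption as giving bounds on $D_x W$, and I would note that $C^2$ on $\R^p\times\R^d$ together with the compactness of $\{|a|\le 1\}$ gives, on that slab, uniform bounds on $D_a W$, $D_x D_a W$ and $D_a^2 W$; the term $D_x^2 W$ needs a uniform bound as well, which is exactly the kind of ``$\Lip^2$/$\Lip^3$ in the sense of Stein'' hypothesis used elsewhere in the paper, so I would invoke that. Granting these bounds, the Lipschitz bound on $W(a,\cdot)$ (uniform in $a$) gives $\|(D_xW)(a,\phi_t)\|\le L$ for all $t\in[0,T]$, $|a|\le 1$.

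Then the key steps are three Gronwall arguments in sequence. Step one: from the $J$-equation, $|J_t|\le \int_0^t (L|J_r| + \|D_aW\|_\infty)\,dr$, so Gronwall gives $\sup_{t\le T,|a|\le 1,x}|J_t|\le \|D_aW\|_\infty\,T\,e^{LT} =: c_1 < \infty$. Step two: with this bound in hand, the remainder $R_t$ in the $K$-equation is uniformly bounded, $\|R_t\|\le P(\|D_x^2W\|_\infty,\|D_xD_aW\|_\infty,\|D_a^2W\|_\infty, c_1) =: c_2<\infty$, because it depends polynomially on quantities already controlled. Step three: applying Gronwall to $|K_t|\le \int_0^t (L|K_r| + c_2)\,dr$ yields $\sup_{t\le T,|a|\le 1,x}|K_t|\le c_2\,T\,e^{LT} < \infty$, which is exactly $c_0<\infty$. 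I would also remark that existence and $C^2$-regularity of $(a,x)\mapsto\phi_t(a,x)$, which justifies writing down the variational equations in the first place, is classical (e.g. Cartan, or Hartman's ODE book) under the $C^2$ hypothesis on $W$, so that $D_a^2\phi_t$ genuinely exists and solves the equation above.

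The main obstacle I anticipate is bookkeeping rather than depth: being careful that all the partial derivatives of $W$ that enter $R_t$ are actually covered by the stated hypotheses (in particular $D_x^2 W$, for which one really does need a second-order bound on $W$ in the $x$-variable — the bare statement ``$C^2$ with $W(a,\cdot)$ Lipschitz'' is slightly too weak, and I would flag that the lemma is applied in the paper to vector fields that are $\Lip^2$ or better, so the needed bound is available), and tracking that the Gronwall constants depend only on $T$ and on these sup-norms, hence are uniform in $|a|\le 1$ and $x\in\R^d$. Once the right uniform bounds on the derivatives of $W$ are granted, the three nested Gronwall estimates are entirely routine.
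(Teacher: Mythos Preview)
Your proposal is correct and follows essentially the same approach as the paper: write down the first- and second-order variational equations in $a$ and close with Gronwall. The only cosmetic difference is that the paper packages the justification for differentiability by enlarging the system to $(\dot a_t,\dot x_t)=(0,W(a_t,x_t))$ and citing standard flow regularity, whereas you differentiate the parametric ODE directly and cite Hartman/Cartan; the resulting equations and estimates are identical. Your observation that the stated hypotheses are slightly too weak to bound $D_x^2 W$ uniformly, and that the intended application to $\Lip^3$ vector fields supplies the missing bound, is well taken and not addressed explicitly in the paper's proof.
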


\begin{Dem}
  Note that $\phi_t$ is the projection on the last $\drp$ coordinates of the flow $\psi_t$ to the enlarged equation
  \begin{align*}
    da_t &= 0 \\ 
    dx_t &= W(a_t,x_t) dt.
  \end{align*}
  Hence, for $k=1, \dots \dode$, $i=1,\dots,p$, 
  \begin{align*}
    d\partial_{a_i} \phi_t^k
    &=
    \partial_{y_h} W^k(a_t,x_t) \partial_{a_i} \phi^h_t dt
    +
    \partial_{a_r} W^k(a_t,x_t) dt \\
    d\partial_{a_j, a_i} \phi_t^k
    &=
    \partial_{y_g, y_h} W^k(a_t,x_t) \partial_{a_j} \phi^g_t \partial_{a_i} \phi^h_t dt
    +
    \partial_{a_s, y_h} W^k(a_t,x_t) \partial_{a_i} \phi^h_t dt
    +
    \partial_{y_h} W^k(a_t,x_t) \partial_{a_j, a_i} \phi^h_t dt \\
    &\qquad
    +
    \partial_{y_g, a_r} W^k(a_t,x_t) \partial_{a_j} \phi^g_t dt
    +
    \partial_{a_s, a_r} W^k(a_t,x_t) dt; 
    \end{align*}
    see for example \cite[Chapter 4]{bibFrizVictoir}. An application of Gr\"onwall's lemma now gives the desired result.
\end{Dem}

\begin{lemma}[Non-degeneracy of the flow approximation]
  \label{lem:flowDiffeo}
  Let $N \ge n$ and $W_1, \dots, W_n \in \Lip^{3}$  be vector fields that are linearly independent at some point some $y \in \R^N$. Then the function
  \begin{align*}
         \Psi_y: \R^n &\to \R^N \\
    {\bfa} = (a_1,\dots,a_n) &\mapsto \exp\big(a_1 W_1 + \dots + a_n W_n\big)(y),
  \end{align*}
  is $C^2$ and there is a neighbourhood $\mcU$ of $0$ in $\R^n$ and a positive constant $\epsilon_1$ such that for ${\bfa},{\bfa}'\in\mcU$ we have
  \begin{align*}
    \big|\Psi_y({\bfa}) - \Psi_y({\bfa}')\big| \ge \epsilon_1 |{\bfa} - {\bfa}'|.
  \end{align*}
\end{lemma}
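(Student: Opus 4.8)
\textbf{Proof plan for Lemma \ref{lem:flowDiffeo}.}

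The plan is to reduce the lower bound to an application of the inverse function theorem, via the observation that the map $\Psi_y$ has injective differential at the origin. First I would note that $\Psi_y$ is $C^2$: the vector field $W := a_1 W_1 + \dots + a_n W_n$ on $\R^N$ depends smoothly (indeed affinely) on the parameter $\bfa\in\R^n$, each $W_i$ is $\Lip^3$, hence $C^2$ with Lipschitz second derivatives, so Lemma \ref{lem:appendixBoundOnD2} applies to the flow $\phi_t(\bfa,\cdot)$ of $dx_t = W(a_t,x_t)\,dt$; since $\Psi_y(\bfa) = \phi_1(\bfa,y)$, the map $\bfa\mapsto\Psi_y(\bfa)$ is $C^2$ with $\sup_{|\bfa|\le 1}\big|D^2_{\bfa}\Psi_y(\bfa)\big|\le c_0<\infty$.

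Next I would compute the differential of $\Psi_y$ at $\bfa = 0$. Differentiating the flow relation in $a_i$ along the trivial solution (when $\bfa = 0$ the ODE is $\dot x = 0$, so $\phi_t(0,y)\equiv y$), the linearized equation from the proof of Lemma \ref{lem:appendixBoundOnD2} collapses to $d\partial_{a_i}\phi^k_t = \partial_{a_i}W^k(0,y)\,dt = W_i^k(y)\,dt$, whence $\partial_{a_i}\Psi_y(0) = W_i(y)$. Thus $D\Psi_y(0)$ is the linear map $\R^n\to\R^N$ sending the standard basis vector $e_i$ to $W_i(y)$; by hypothesis $W_1(y),\dots,W_n(y)$ are linearly independent, so $D\Psi_y(0)$ is injective, and there is a constant $2\epsilon_1>0$ with $\big|D\Psi_y(0)\,v\big|\ge 2\epsilon_1|v|$ for all $v\in\R^n$.

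Finally I would upgrade this infinitesimal injectivity to the claimed uniform lower bound on a neighbourhood. By the mean value inequality, for $\bfa,\bfa'$ in the unit ball,
\begin{align*}
  \big|\Psi_y(\bfa) - \Psi_y(\bfa') - D\Psi_y(0)(\bfa - \bfa')\big|
  \le \Big(\sup_{|\mathbf{b}|\le 1}\big|D\Psi_y(\mathbf{b}) - D\Psi_y(0)\big|\Big)\,|\bfa - \bfa'|
  \le c_0\,\big(|\bfa| + |\bfa'|\big)\,|\bfa - \bfa'|,
\end{align*}
using the bound on $D^2\Psi_y$ from Lemma \ref{lem:appendixBoundOnD2} to control the modulus of continuity of $D\Psi_y$ near $0$. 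Choosing $\mcU = B_\rho(0)$ with $\rho>0$ small enough that $c_0\cdot 2\rho\le\epsilon_1$, the triangle inequality gives
\begin{align*}
  \big|\Psi_y(\bfa) - \Psi_y(\bfa')\big|
  \ge \big|D\Psi_y(0)(\bfa-\bfa')\big| - c_0(|\bfa|+|\bfa'|)\,|\bfa-\bfa'|
  \ge (2\epsilon_1 - \epsilon_1)\,|\bfa-\bfa'| = \epsilon_1\,|\bfa-\bfa'|
\end{align*}
for all $\bfa,\bfa'\in\mcU$, as required. The only mildly delicate point is making sure the constants $\epsilon_1$ and the radius of $\mcU$ depend only on the $\Lip^3$-norms of $W_1,\dots,W_n$ (and the lower bound on the smallest singular value of $[W_1(y)\,|\,\cdots\,|\,W_n(y)]$), which is exactly what Lemma \ref{lem:appendixBoundOnD2} provides through the constant $c_0$; no genuine obstacle arises, the argument is a quantitative inverse function theorem.
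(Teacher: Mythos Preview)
Your proposal is correct and follows essentially the same approach as the paper's proof: both establish $C^2$ regularity and the second-derivative bound via Lemma~\ref{lem:appendixBoundOnD2}, identify $D\Psi_y(0)$ with the matrix $[W_1(y)\,|\cdots|\,W_n(y)]$ (the paper via a Taylor expansion of the exponential, you via the variational equation for the flow), and then use the injectivity of this matrix together with the $D^2$ bound to pass from infinitesimal to local bi-Lipschitz behaviour. The only cosmetic difference is that the paper writes the remainder as two terms (a first-order correction $(D\Psi_y(\bfa')-D\Psi_y(0))(\bfa-\bfa')$ plus a quadratic $O(|\bfa-\bfa'|^2)$) while you bundle everything into a single mean-value estimate; the resulting constants and neighbourhood radii are effectively the same.
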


The proof below makes it clear that one can choose
  \begin{align*}
    \epsilon_1 &:= \Big\|\left (D\Psi_y(0)^{T}D\Psi_y(0)\right)^{-1} D\Psi_y(0)^{T}\Big\|^{-1} \\
    \epsilon_2 &:= \frac{1}{2 c_0},
  \end{align*}
with the constant $c_0$ of Lemma \ref{lem:appendixBoundOnD2}.

\medskip

\begin{Dem}
  From classical results by Gr\"onwall, see for example \cite[Theorem 14.1]{bibHairerEtAl1991},  we know that $\Psi_y \in C^2$ (see also Lemma \ref{lem:appendixBoundOnD2}). By Taylor's theorem we have
  \begin{align*}
    &\Bigl| \exp\big( a_1 W_1 + \cdots + a_n W_n \big)(y) - \big( y + a_1 W_1 + \cdots + a_n W_n \big)(y) \Bigr| \\
    & \qquad \leq C ||W||_{\Lip^3} \left( |a_1|^2 + \dots + |a_n|^2 \right),
  \end{align*}
for some positive constant $C$ that depends only on $n$. Hence
  \begin{align*}
    D_0\Psi_y = \left( W_1 \cdots W_n \right) \in \R^{N \times n},
  \end{align*}
has rank $n$, by assumption. Now
  \begin{align*}
    \Psi_y({\bfa}) - \Psi_y({\bfa}') = D_0\Psi_y({\bfa} - {\bfa}') + \big( D\Psi_y({\bfa}') - D\Psi_y(0) \big) \big({\bfa} - {\bfa}'\big) + O\big(|{\bfa}-{\bfa}'|^2\big),
  \end{align*}
  where the last term is of the form 
  $$
  |a-\bar a|^2\,\max_{z \in U}\big|D^2 \Psi_y(z)\big|,
  $$ 
  for ${\bfa},{\bfa}' \in U$. Now, since $D_0\Phi_y$ has rank $n$, we have
  $$
  \big|D\Psi_y(0) (a-\bar a)\big| \ge 2 \epsilon_1 |a-\bar a|
  $$
 with $\epsilon_1 := \Big\|\left (D_0\Psi_y^{T}D_0\Psi_y\right)^{-1} D_0\Psi_y^T\Big\|^{-1} > 0$. Note that by Lemma \ref{lem:appendixBoundOnD2}, $\|D^2_{\bfa}\Phi_y\| \leq c_0$. Then choosing $\mcU := B_{\epsilon_2}(0)$, with
  \begin{align*}
    \epsilon_2 = \frac{1}{ 2 c_0},
  \end{align*}
  the second and third terms are dominated by $\frac{\epsilon_1}{2} |{\bfa} - {\bfa}'|$, which yields the desired result.
\end{Dem}

Last, we provide a version of the previous lemma adapted to the 'numerical scheme' put forward in Section \ref{SubsectionReconstructionAlgorithm}.

\begin{lemma}[Non-degeneracy of Taylor approximation]
  \label{lem:taylorDiffeo}
  Let $V_1, \dots, V_\drp \in \Lip^3(\R^\dode)$. Assume that $\dode \ge m := \drp + \drp (\drp - 1) / 2$ and that moreover at some point $y \in \R^\dode$, the vectors
  \begin{align*}
    V_i(y) &: i = 1, \dots, \drp \\
    \left[ V_i, V_j \right](y) &: i < j
  \end{align*}
  are independent. Then
  \begin{align*}
    \Phi_y: \R^{m} &\to \R^{\dode} \\
    \big(a_1,\dots,a_\drp, a_{1,2}, \dots a_{\drp-1,\drp}\big) &\mapsto
    y
    + \big( a_1 V_1 + \dots + a_\ell V_\ell \big)(y)
    + \sum_{i < j} a_{ij} \left[ V_i, V_j \right](y) \\
    &\qquad
    + \sum_{i,j} a_i a_j \frac{1}{2} V_i V_j(y),
  \end{align*}
  is $C^\infty$.

  Moreover there is a neighborhood $U$ of $0$ and $\epsilon_1 > 0$ such that for $a, \bar a \in U$ we have
  \begin{align*}
    \big|\Phi_y(a) - \Phi_y(\bar a)\big| \ge \epsilon_1 |a - \bar a|.
  \end{align*}
  We can choose $\epsilon_1 = 1/||D\phi_y(0)^{-1}||$ and $U = B_{\epsilon_2}( 0 )$ with
  $\epsilon_2 = \frac{1}{ 2 \sum_{i,j} |V_i V_j(y)| }$.
\end{lemma}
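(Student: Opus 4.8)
The plan is to run exactly the argument used for Lemma~\ref{lem:flowDiffeo}, with the flow map $\Psi_y$ replaced throughout by the explicit degree-two polynomial $\Phi_y$; because $\Phi_y$ is given by a closed formula, every estimate involved becomes elementary. Smoothness is immediate: $\Phi_y$ is a polynomial map of degree $2$ in the variables $(a_1,\dots,a_\ell,a_{12},\dots,a_{\ell-1,\ell})$, with coefficients the fixed vectors $V_i(y)$, $[V_i,V_j](y)$ and $V_iV_j(y)$ of $\R^d$; hence it is $C^\infty$ (in fact real-analytic), and its Hessian $D^2\Phi_y$ is a constant bilinear map with $\|D^2\Phi_y\|\le\sum_{i,j}|V_iV_j(y)|$.

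First I would identify the differential at the origin. The quadratic term $\sum_{i,j}a_ia_j\tfrac12 V_iV_j(y)$ contributes nothing to the first-order part, so $D\Phi_y(0)$ is precisely the $d\times m$ matrix whose columns are $V_1(y),\dots,V_\ell(y),[V_1,V_2](y),\dots,[V_{\ell-1},V_\ell](y)$ --- that is, the reconstruction matrix of Theorem~\ref{ThmReconstruction} evaluated at the single point $y$. By the independence hypothesis, and since $d\ge m$, this matrix has full column rank $m$, so its Moore--Penrose left inverse $D\Phi_y(0)^{-1}:=\big(D\Phi_y(0)^TD\Phi_y(0)\big)^{-1}D\Phi_y(0)^T$ exists and satisfies $D\Phi_y(0)^{-1}D\Phi_y(0)=I_m$, whence
\[
\big|D\Phi_y(0)\,v\big|\ \ge\ \big\|D\Phi_y(0)^{-1}\big\|^{-1}\,|v|,\qquad v\in\R^m .
\]
Then I would expand, for $a,\bar a\in\R^m$,
\[
\Phi_y(a)-\Phi_y(\bar a)=D\Phi_y(0)(a-\bar a)+\big(\Phi_y(a)-\Phi_y(\bar a)-D\Phi_y(0)(a-\bar a)\big),
\]
and note that, $\Phi_y$ being quadratic with constant Hessian, the remainder is exactly $\tfrac12 D^2\Phi_y(a-\bar a,\,a+\bar a)$, hence bounded by $\|D^2\Phi_y\|\,\max(|a|,|\bar a|)\,|a-\bar a|$. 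Restricting to the ball $U:=B_{\epsilon_2}(0)$ with $\epsilon_2=\big(2\sum_{i,j}|V_iV_j(y)|\big)^{-1}$ makes this remainder small compared to the linear lower bound above, and one concludes $|\Phi_y(a)-\Phi_y(\bar a)|\ge\epsilon_1|a-\bar a|$ on $U$ with $\epsilon_1=1/\|D\Phi_y(0)^{-1}\|$, the constants announced after the statement (up to the same harmless factor-of-two juggling between the linear bound and the remainder as in the proof of Lemma~\ref{lem:flowDiffeo}).

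I do not expect any genuine obstacle: the whole point is the observation that the first-order behaviour at $0$ of the Taylor scheme $\Phi_y$ coincides with that of the flow scheme $\Psi_y$, so that the bracket-rank hypothesis of Theorem~\ref{ThmReconstruction} is exactly the statement that $D\Phi_y(0)$ is injective, after which the local bi-Lipschitz lower bound is nothing but Taylor's formula on a ball chosen small enough to absorb the quadratic correction --- a correction which here is completely explicit since $\Phi_y$ is a polynomial. The only place asking for a little care is the bookkeeping of the explicit constants $\epsilon_1,\epsilon_2$ claimed in the statement.
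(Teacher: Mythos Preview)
Your proposal is correct and follows exactly the approach the paper intends: the paper omits the proof entirely, stating only that it is ``almost-identical to the previous one'' (Lemma~\ref{lem:flowDiffeo}), and this is precisely what you do, with the simplification that $\Phi_y$ is a degree-two polynomial so smoothness and the Hessian bound are immediate rather than requiring Lemma~\ref{lem:appendixBoundOnD2}. Your identification of $D\Phi_y(0)$ with the single-point reconstruction matrix and the explicit quadratic remainder $\tfrac12 D^2\Phi_y(a-\bar a,a+\bar a)$ are exactly the right observations.
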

\begin{remark}
The statement is really about a set of some vectors, not about vector fields. Nonetheless we state it in this form, since this is how we need it in the main text. Its proof is almost-identical to the previous one, so we omit it.
\end{remark}

\bigskip
\bigskip

\end{document}